\tikzset{>=stealth',
         cvertex/.style={circle,draw=black,inner sep=1pt,outer sep=3pt},
         vertex/.style={circle,fill=black,inner sep=1pt,outer sep=3pt},
         star/.style={circle,fill=yellow,inner sep=0.75pt,outer sep=0.75pt},
         tvertex/.style={inner sep=1pt,font=\scriptsize},
         gap/.style={inner sep=0.5pt,fill=white}}
\newcommand{\ZZ}{\mathbb{Z}}
\newcommand{\NN}{\mathbb{N}}
\newcommand{\CC}{\mathbb{C}}
\newcommand{\QQ}{\mathbb{Q}}
\newcommand{\FF}{\mathbb{F}}
\newcommand{\Es}{E_{p-1}^*}
\newcommand{\FFc}{\bar{\mathbb{F}}}
\newcommand{\Spec}{\mbox{Spec}}
\newcommand{\CL}{\mathcal{L}}
\newcommand{\CM}{\mathcal{M}}
\newcommand{\CO}{\mathcal{O}}
\newcommand{\CN}{\mathcal{N}}
\newcommand{\GZ}{\Gamma_0}
\newcommand{\GI}{\Gamma_1}
\title{Generators and relations of the graded algebra of modular forms}
\author{Nadim Rustom}
\newtheorem{thm}{Theorem}[section]
\newtheorem{rem}[thm]{Remark}
\newtheorem{conj}{Conjecture}
\newtheorem{cor}[thm]{Corollary}
\newtheorem{lem}[thm]{Lemma}
\newtheorem{defn}{Definition}
\newtheorem{algo}{Algorithm}
\begin{document}
\maketitle

\begin{abstract}
We give bounds on the degree of generators for the ideal of relations of the graded algebras of modular forms with coefficients in $\QQ$ over congruence subgroups $\GZ(N)$ for $N$ satisfying some congruence conditions and for $\GI(N)$. We give similar bounds for the graded $\ZZ[\frac{1}{N}]$-algebra of modular forms on $\GI(N)$ with coefficients in $\ZZ[\frac{1}{N}]$. For a prime $p \geq 5$, we give a lower bound on the highest weight appearing in a minimal list of generators for $\GZ(p)$, and we identify a set of generators for the graded algebra $M(\GZ(p),\ZZ)$ of modular forms over $\GZ(p)$ with coefficients in $\ZZ$, showing that, in contrast to the cases studied in \cite{rustom}, this weight is unbounded. We generalize a result of Serre concerning congruences between modular forms over $\GZ(p)$ and $SL_2(\ZZ)$, and use it to identify a set of generators for $M(\GZ(p),\ZZ)$, and we state two conjectures detailing further the structure of this algebra. Finally we provide computations concerning the number of generators and relations for each of these algebras, as well as computational evidence for these conjectures. 
\end{abstract}

\section*{Introduction}
Fix a congruence subgroup $G$ which is either of the form $\GZ(N)$ or $\GI(N)$ for some $N$, and $A$ a subring of $\CC$. A modular form on $G$ has a $q$-expansion:
\[ f(z) = \sum_{i \geq 0} a_i q^i \]
where $q = e^{2i\pi z}$ and $a_i \in \CC$ for all $i\geq 0$. If $A$ is a subring of $\CC$, and if $f$ is a modular form of level $G$ and weight $k$ such that:
\[ f(z) = \sum_{i \geq 0} a_i q^i \in A[[q]]\]
then we say that $f$ has coefficients in $A$. The set of modular forms on $G$ and coefficients in $A$ is an $A$-module, which we denote by $M_k(G,A)$. In the case where $A = \CC$, we write simply $M_k(G) = M_k(G,\CC)$. When $G = SL_2(\ZZ)$ is the full modular group, we say $f$ is of level $1$.\\
The set $\{f \in M_k(G,A): k = 0 \mbox{ or } k \geq 2\}$ generates a graded $A$-module which we denote by $M(G,A)$, and which can be written as the direct sum:
\[M(G,A) = M_0(G,A) \oplus \left( \bigoplus_{k \geq 2} M_k(G,A)\right)\]
where $M_0(G,A) = A$. When $A = \CC$, we write $M(G) = M(G,\CC)$. When there is no ambiguity, we may drop the congruence group and the ring of coefficients from the notation.\\
If $A = \FF_p$, then by $M_k(G,\FF_p)$ we mean the formal mod $p$ reductions of the $q$-expansions of modular forms in $M_k(G,\ZZ)$. Thinking of modular forms as $q$-expansions, we write:
\[M_k(G,\FF_p) = M_k(G,\ZZ)\otimes \FF_p. \]
We then define $M(G,\FF_p)$ similary as:
\[M(G,\FF_p) = M_0(G,\FF_p) \oplus \left( \bigoplus_{k \geq 2} M_k(G,\FF_p)\right).\]
When $G = \GI(N)$ with $N \geq 5$, and $k \geq 2$, we have a more conceptual interpretation of mod $p$ modular forms; see Section 1 for further clarification. The reason we omit modular forms of weight $1$ is to avoid the difficulty arising from the existence of modular forms mod $p$ of weight $1$ which do not lift to characteristic 0, and because in practice it is more difficult to do computations with weight 1 modular forms. \\ 
For a congruence group $G$ and a subring $A$ of $\CC$, the graded $A$-algebra $M(G,A)$ is finitely generated (see \cite{DR}, Th\'eor\`eme 3.4). Thus there exists an integer $n \geq 2$ such that the smallest graded $A$-subalgebra of $M(G,A)$ containing:
\[M_0(G,A) \oplus \left( \bigoplus_{k = 2}^{n} M_k(G,A)\right)\]
is the whole algebra $M(G,A)$. For any such $n$, we say that $M(G,A)$ is generated in weight at most $n$, and the smallest such $n$ is called the generating weight of $M(G,A)$. \\
In \cite{rustom}, we studied the graded algebras of modular forms of various levels and over subrings $A$ of $\CC$. Our main theorem in \cite{rustom} was that for $N \geq 5$, the algebra $M(\GI(N),\ZZ[\frac{1}{N}])$ is generated in weight at most $3$. The key idea was to use a result that first appeared in Mumford's paper \cite{mumford} concerning invertible sheaves on algebraic curves, and apply it to modular curves over finite fields $\FF_p$ for all $p \nmid N$, to conclude the result over $\ZZ[\frac{1}{N}]$. To deal with mod $p$ modular forms, we appealed to the existence of a fine moduli scheme classifying elliptic curves with $\GI(N)$ structure when $N \geq 5$. For levels $\GZ(N)$ (for $N$ satisfying some congruence conditions) and over $\CC$ (equivalently, over $\QQ$), we proved a similar result, where the generating weight is now $6$.\\
This article is divided into three parts. In the first part, we deal with the ideal of relations. Suppose that $\{g_1,\cdots, g_r\}$ is a minimal set of generators of $M(G,A)$. Then one can define a homomorphism of graded $A$-algebras:
\[\Phi : A[x_1,\cdots,x_r] \mapsto M(G,A)\]
given by $\Phi(x_i) = g_i$ for $1 \leq i \leq r$, where $A[x_1,\cdots,x_r]$ is a weighted polynomial ring, each $x_i$ receiving the weight of $g_i$. The ideal $I = \ker \Phi$ is finitely generated, and it is the ideal of relations. We say that the ideal of relations of $M(G,A)$ (always with respect to a minimal set of generators) is generated in degree at most $n$ if one can pick generators $\{r_1,\cdots,r_m\}$ of $I$ each lying in degree at most $m$. Again using the tools in \cite{mumford}, we provide bounds on the degree of generators of the ideal of the relations that exist between elements of a minimal set of generators of the algebra. More precisely, we prove:\\

\textbf{Theorem \ref{relthm}. } \textit{Choosing a minimal set of generators for $M = M(\Gamma,\QQ)$, the ideal of relations is generated:
        \begin{itemize}
            \item in degree at most $6$ when $\Gamma = \Gamma_1(N)$ for $N \geq 5$, or
            \item  in degree at most $10$ when $\Gamma = \Gamma_0(N)$ for $N$ satisfying:
            \[N \equiv 0 \pmod{4} \mbox{ or } N \equiv 0 \pmod{p}, \mbox{ } p \equiv 3 \pmod{4} \]
             and:
            \[N \equiv 0 \pmod{9} \mbox{ or } N \equiv 0 \pmod{p}, \mbox{ } p \equiv 5 \pmod{6}. \]
        \end{itemize}}
The main result of this section concerns the ideal of relations for modular forms of level $\GI(N)$ and with coefficients in $\ZZ[\frac{1}{N}]$, and we prove it using the existence of the fine moduli scheme. Precisely, we prove:\\

\textbf{Theorem \ref{relthmZZ}. } \textit{Let $N \geq 5$. Choosing a minimal set of generators for $M = M(\GI(N),\ZZ[\frac{1}{N}])$, the ideal of relations is generated in degree at most $6$. }\\\\
In the second part, we turn our attention to modular forms of level with coefficients in $\ZZ$. Here we choose to deal with the easiest case, the modular forms of level $\GZ(p)$, although we expect that our method generalizes to other congruence subgroups. First we look at what the generating weight can be, and we prove that, when restricting to coefficients in $\ZZ$, there is no longer a uniform upper bound on the generating weight independently of the level. More explicitly, we prove:\\

\textbf{Theorem \ref{genZ}. }\textit{Let $N \geq 5$ and let $p \geq 5$ be a prime which divides $N$ exactly once. Then any set of generators for $M(\Gamma_0(N),\ZZ)$ contains a form of weight $p-1$. In particular, the generating weight of $M(\Gamma_0(N),\ZZ)$ is at least $p-1$.}\\\\
Next, we proceed to identify a set of generators for $M(\GZ(p), \ZZ)$. This set consists of the $T$-form $T \in M_{p-1}(\GZ(p),\ZZ)$ appearing in \cite{rustom}, given by:

\[ T(z) := \left(\frac{\eta(pz)^p}{\eta(z)}\right)^2, \]

and the subset $S$ of those modular forms with coefficients in $\ZZ$ whose $p$-adic valuation at the cusp $0$ are not too negative. We recall here that the $T$-form played a fundamental role in the algorithm we developed in \cite{rustom} based on the work of Scholl in \cite{scholl}. In particular, for $f = \sum_{n\geq 0} a_n q^n$, we define the $p$-adic valuation of $f$ as: 

\[ v_p(f) = \inf\{v_p(a_n) : n \geq 0\}, \] and we show:\\

\textbf{Theorem \ref{gensZZ}. } \textit{
Let $S = \{f \in M(\GZ(p),\ZZ) : v_p(\tilde{f})\geq 0 \}$. Then $M(\GZ(p), \ZZ)$ is generated by $T$ and the set $S$.} \\\\
In order to prove that these modular forms generate $M(\GZ(p), \ZZ)$, we first prove a generalization of a result of Serre appearing in \cite{serre} concerning congruence relations between modular forms on $\GZ(p)$ and on $SL_2(\ZZ)$. In that paper, Serre proves that for a modular form $f$ of level $p$ and of weight $k$, there is a modular form $g$ of level $1$ and of weight $k' \geq k$ such that $f \equiv g \pmod{p}$. Now for $f \in M_k(\GZ(N),\ZZ)$, and an odd prime $p$ dividing $N$ exactly once, set $\tilde{f} = p^{k/2} f|W_p^{N}$, where $f|W_p^N$ is the image of $f$ under the Atkin-Lehner involution associated to $p$ (see Section \ref{lowerbound}). Our generalization is formulated as follows:\\

\textbf{Theorem \ref{congthm}. }Let $p \geq 5$, $f \in M_k(\GZ(p),\ZZ)$ with $v_p(f) = 0$ and $v_p(\tilde{f}) = k + a$. Then there exists $g \in M_{k-a(p-1)}(SL_2(\ZZ),\ZZ)$ such that $f \equiv g \pmod{p}$. \\\\
Note that whenever $f$ satisfies the conditions of this theorem, we have $k \geq a(p-1)$ by the bounds on $v_p(\tilde{f})$ given in Proposition 3.20 of \cite{DR}. Serre's result covers the case of the theorem where $a \leq 0$, so that the weight we get for the level $1$ modular form $g$ is $k -a(p-1)\geq k$, and he proves this using an elementary trace argument. Our generalization shows that even if $a > 0$, such a congruence holds, and one can actually pick the level 1 form $g$ in weight $k' = k -a(p-1) < k$. Serre's argument does not generalize in an obvious way, and to prove this generalization, we resort to the intersection theory on the Deligne-Mumford stacks classifying elliptic curves with $\GZ(p)$ structure as studied in \cite{DR}. We provide a brief summary of the main notions of this intersection theory. Finally we state two conjectures (Conjectures \ref{G0conj1} and \ref{G0conj2}) concerning the generating weight of the subalgebra generated by $S$.\\
The results of this paper along with those in \cite{rustom} allow us to write down an explicit algorithm that fully determines the structure of algebra $M(G,A)$ with a minimal set of generators and relations (when $A$ is a PID). The last part of this paper comprises the results of computations carried out using the algorithm and contains the structure of this algebra for various $G$ and $A$. It also contains the results of computations that support Conjecture \ref{G0conj2}.\\\\
\textbf{Acknowledgements. } The author wishes to thank Jim Stankewicz for helpful discussions and clarifications he offered regarding the paper of Deligne and Rapoport (\cite{DR}) and their theory of the moduli stacks.

\subsection*{Notation, definitions and conventions}
Here we add futher definitions, notation, and conventions to the ones given above in the introduction. The standard Eisenstein series of level 1 and weight $k$, normalized so that the constant term in the $q$-expansion is $1$, is denoted by $E_k$. The unique cusp form of level 1 and weight $12$ (normalized to have a leading term $q$ in the $q$-expansion) is denoted by $\Delta$. The Dedekind eta function $\eta$ is defined by the product:
\[ \eta(z) = e^{\frac{2i\pi z}{24}} \prod_{n=1}^{\infty}(1-q^n)\]
and is a modular form of weight $\frac{1}{2}$. The modular forms $\eta$ and $\Delta$ are related by:
\[ \eta(z)^{24}= \Delta(z). \]
We recall the defintion of a $T$-form, first defined in \cite{scholl}, which played a key role in our work appearing in \cite{rustom}.
\begin{defn}\label{deftform}For a congruence group $G$ and a ring $A \subset \CC$, a modular form $T \in M_k(G,A)$ is called a $T$-form if it satisfies the following conditions:
\begin{itemize}
\item $T$ only vanishes at the cusp $\infty$, and
\item the $q$-expansion of $T$ lies in $A[[q]]$ and the $q$-expansion of $T^{-1}$ lies in $A((q))$.
\end{itemize}
\end{defn}
If $f \in M_k(G)$, and $\gamma \in GL_2(\QQ)$, where:
\[\gamma = \begin{pmatrix} a & b \\ c & d \end{pmatrix}, \] we define the action of $\gamma$ on $z \in \CC$ by:
\[\gamma \cdot z = \frac{az + b}{cz + d},\] and the operator $-|_k \gamma : f \mapsto f|_k\gamma$ by:
\[(f|_k\gamma)(z) = (\det \gamma)^{k/2} \left( cz + d \right)^{-k} f(\gamma \cdot z). \]
Note that for a fixed level $G$, the collection of operators $\{-|_k\gamma : k \in \NN\}$ defines a graded operator on $M(G)$, which we denote simply by $-|\gamma : f \mapsto f|\gamma$.

\section{Relations}

Let $X$ be a be a smooth, geometrically connected algebraic curve of genus $g$ over a perfect field $k$. Let $\CL$, $\CM$, and $\CN$ be three invertible sheaves on $X$. We have the following exact sequence:

\[0 \rightarrow R(\CL,\CM) \rightarrow H^0(X,\CL)\otimes H^0(X,\CM) \xrightarrow{\mu} H^0(X,\CL\otimes \CM) \rightarrow S(\CL,\CM) \rightarrow 0 \]
where $\mu$ is the natural multplication map: $\sum f_i \otimes g_i \mapsto \sum f_i g_i$, $R(\CL,\CM)$ and $S(\CL,\CM)$ are respectively its kernel and cokernel. In \cite{rustom}, we studied the generators of graded algebras of modular forms, using the first of the following results that first appear in \cite{mumford}.

\begin{lem}\label{mumlema}
Let $\CL$, $\CM$, and $\CN$ be as above. 
\begin{enumerate}
\item If $\deg \CL \geq 2g+1$ and $\deg \CM \geq 2g$, then $\mu$ is surjective. 
\item The natural map:
\[ R(\CL,\CM)\otimes H^0(X,\CN) \rightarrow R(\CL\otimes \CN, \CM) \]
mapping $(\sum f_i\otimes g_i)\otimes h \mapsto \sum (f_i h)\otimes g_i$ is surjective if $\deg \CL \geq 3g+1$, and $\min\{\deg \CM,\deg \CN\} \geq 2g+2$. 
\end{enumerate}
\end{lem}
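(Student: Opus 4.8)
The plan is to reduce both statements to the vanishing of a single first cohomology group on $X$, using that a surjectivity-of-multiplication statement is governed by an $H^1$-obstruction coming from a short exact sequence of sheaves. Throughout I write $K_X$ for the canonical sheaf (of degree $2g-2$), recall Serre duality $H^1(X,\CF)\cong H^0(X,\CF^\vee\otimes K_X)^\vee$ for locally free $\CF$, and use that an invertible sheaf of degree $\geq 2g$ is globally generated, while one of degree $\geq 2g+1$ is very ample and normally generated.

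For part (1) I would argue as follows. Since $\deg\CM\geq 2g$, the sheaf $\CM$ is globally generated, so evaluation gives a short exact sequence
\[ 0\rightarrow M_\CM \rightarrow H^0(X,\CM)\otimes\OO_X \rightarrow \CM \rightarrow 0 \]
defining the kernel bundle $M_\CM$. Tensoring by $\CL$ and taking cohomology identifies the cokernel of $\mu$ with a subspace of $H^1(X,\CL\otimes M_\CM)$, so it suffices to prove $H^1(X,\CL\otimes M_\CM)=0$. The elementary route is the base-point-free pencil trick: a base-point-free pencil $V\subseteq H^0(X,\CM)$ yields $0\to\CL\otimes\CM^{-1}\to\CL\otimes V\to\CL\otimes\CM\to 0$, whence $\operatorname{coker}\mu$ vanishes as soon as $H^1(X,\CL\otimes\CM^{-1})=0$, and by Serre duality the latter holds once $K_X\otimes\CM\otimes\CL^{-1}$ has negative degree. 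When the degrees of $\CL$ and $\CM$ are too close for this crude bound, I would peel off points of $\CM$, using $0\to\CM(-P)\to\CM\to\CM|_P\to 0$ to run an induction on $\deg\CM$ down to the base case $\deg\CM=2g$ and comparing the multiplication maps for $\CM$ and $\CM(-P)$ by a diagram chase. This is where the asymmetric bounds $\deg\CL\geq 2g+1$, $\deg\CM\geq 2g$ are consumed.

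For part (2) I would first reinterpret the relation space. The same evaluation sequence shows, for any invertible $\CA$, that $R(\CA,\CM)=\ker\big(H^0(X,\CA)\otimes H^0(X,\CM)\xrightarrow{\mu}H^0(X,\CA\otimes\CM)\big)\cong H^0(X,\CA\otimes M_\CM)$. Applying this with $\CA=\CL$ and with $\CA=\CL\otimes\CN$, the map in the statement becomes precisely the multiplication map
\[ H^0(X,\CL\otimes M_\CM)\otimes H^0(X,\CN)\rightarrow H^0(X,\CL\otimes M_\CM\otimes\CN), \]
since $(\sum f_i\otimes g_i)\otimes h\mapsto\sum(f_i h)\otimes g_i$ corresponds to multiplying the $H^0(X,\CL\otimes M_\CM)$-factor by $h$. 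Thus part (2) is exactly the vector-bundle analogue of part (1), with the invertible sheaf $\CL$ replaced by the locally free sheaf $\CE:=\CL\otimes M_\CM$. Running the pencil-trick argument for $\CE$ and $\CN$ reduces surjectivity to $H^1(X,\CE\otimes\CN^{-1})=H^1(X,\CL\otimes M_\CM\otimes\CN^{-1})=0$, which I would establish by feeding the evaluation sequence for $M_\CM$ through the same Serre-duality and global-generation analysis. Because $\CE$ now has rank $h^0(X,\CM)-1$ rather than $1$, more positivity is required, and this is what forces the heavier hypotheses $\deg\CL\geq 3g+1$ and $\min\{\deg\CM,\deg\CN\}\geq 2g+2$.

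The hard part, in both cases, will be matching the precise numerical bounds rather than merely proving surjectivity for sufficiently positive sheaves. The naive degree estimate from the pencil trick is too weak whenever the factors have comparable degree, so the real content is the $H^1$-vanishing for twists of the higher-rank kernel bundle $M_\CM$: one must either invoke (semi)stability of $M_\CM$ when $\deg\CM\geq 2g$ to control its sub-line-bundles, or carry out the point-by-point inductive reduction carefully enough to see exactly where each ``$+1$'' in the bounds originates. Tracking these constants is the delicate step; the remainder is formal homological algebra on $X$.
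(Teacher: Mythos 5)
First, a point of reference: the paper never proves this lemma --- it is quoted from Mumford's paper \cite{mumford} (``results that first appear in \cite{mumford}''), so your proposal has to be judged against Mumford's argument rather than anything in the text. Your general framework does match his: reducing surjectivity to an $H^1$-vanishing via the evaluation sequence $0\to M_{\CM}\to H^0(X,\CM)\otimes\OO_X\to\CM\to 0$, and, for part (2), the identification $R(\CA,\CM)\cong H^0(X,\CA\otimes M_{\CM})$, under which the map in question becomes the multiplication map $H^0(X,\CL\otimes M_{\CM})\otimes H^0(X,\CN)\to H^0\bigl(X,(\CL\otimes M_{\CM})\otimes\CN\bigr)$. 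Those reductions are correct and are exactly how the proof is organized. But what you have after that is a strategy with the hard content missing, and in one place a step that is actually false.

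Concretely: for part (1), your peeling induction is a valid inductive step, but it only reduces the problem to the boundary case $\deg\CL=2g+1$, $\deg\CM=2g$, and there the pencil trick is useless --- $H^1(X,\CL\otimes\CM^{-1})=0$ would require $\deg\CL\geq \deg\CM+2g-1=4g-1$. That boundary case \emph{is} the lemma; the idea that closes it (and which your sketch never names) is genericity: peel off $h^0(\CM)-2$ \emph{general} points $P_i$, so that $M_{\CM}$ is filtered by the line bundles $\OO_X(-P_i)$ and $\CM^{-1}(\sum P_i)$, and then use that $\CL\otimes\CM^{-1}(\sum P_i)$ has degree $\deg\CL-g-1\geq g$ and is a \emph{general} translate, hence has vanishing $H^1$ --- generic vanishing, not a degree bound, is what produces the sharp constants. (Your alternative, semistability of $M_{\CM}$ for $\deg\CM\geq 2g$, does also work, but that is Butler's theorem, a substantial result you would have to prove; it is not ``formal homological algebra''.) For part (2) the situation is worse: the pencil trick in $\CN$ reduces surjectivity to $H^1(X,\CL\otimes M_{\CM}\otimes\CN^{-1})=0$, and this statement is \emph{false} under the lemma's hypotheses, which place no upper bound on $\deg\CN$. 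Once $\deg\CN>\deg\CL$, the bundle $\CL\otimes M_{\CM}\otimes\CN^{-1}$ (of rank $r=h^0(\CM)-1$ and degree $r(\deg\CL-\deg\CN)-\deg\CM$) has negative Euler characteristic, so $h^1>0$ by Riemann--Roch; no Serre-duality analysis can rescue that reduction. The correct reduction uses the full kernel bundle of $\CN$, i.e.\ one must prove $H^1(X,\CL\otimes M_{\CM}\otimes M_{\CN})=0$, which is again done by filtering $M_{\CN}$ along general points $Q_j$: one then needs $H^1(X,\CL(-Q_j)\otimes M_{\CM})=0$ (a part-(1) vanishing with $\deg\CL(-Q_j)\geq 3g$) and $H^1\bigl(X,\CL\otimes\CN^{-1}(\textstyle\sum Q_j)\otimes M_{\CM}\bigr)=0$ (a part-(1)-type vanishing in degree $\deg\CL-g-1\geq 2g$, which is precisely where $\deg\CM\geq 2g+2$ is consumed). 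Your sketch never reaches the point where the hypotheses $3g+1$ and $2g+2$ do any work, which is a reliable sign that the core of the proof is absent.
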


We recall the following fact proven in \cite{rustom}.
\begin{cor} \label{gencorQ} $M(\Gamma,\QQ)$ is generated:
\begin{enumerate}
\item in weight at most $3$, when $\Gamma = \Gamma_1(N)$ for $N \geq 5$, or
\item in weight at most $6$, when $\Gamma = \Gamma_0(N)$ for $N$ satisfying the following congruence conditions:
            \[N \equiv 0 \pmod{4} \mbox{ or } N \equiv 0 \pmod{p}, \mbox{ } p \equiv 3 \pmod{4} \]
             and:
            \[N \equiv 0 \pmod{9} \mbox{ or } N \equiv 0 \pmod{p}, \mbox{ } p \equiv 5 \pmod{6} \]
\end{enumerate}
\end{cor}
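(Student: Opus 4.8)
The plan is to translate the statement into the vanishing of the cokernels $S(\CL,\CM)$ from Lemma~\ref{mumlema} for line bundles built out of $\som$ on the modular curve, and then read off the generating weight by an induction on weight. First I would set $X=X(\Gamma)$, the modular curve over $\QQ$, which is smooth, geometrically connected, and defined over the perfect field $\QQ$, so that Lemma~\ref{mumlema} applies verbatim; and I let $\som=\pi_*\Omega^1_{\EE/X}$ for the universal generalized elliptic curve $\pi\colon\EE\to X$. Using that $X$ and $\som$ are defined over $\QQ$ with rational $q$-expansions at $\infty$, I identify $M_k(\Gamma,\QQ)=H^0(X,\som^{\otimes k})$ for $k\ge 2$, so that the multiplication map $M_a\otimes M_b\to M_{a+b}$ becomes exactly the map $\mu$ of the lemma with $\CL=\som^{\otimes a}$ and $\CM=\som^{\otimes b}$. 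For $\GI(N)$ with $N\ge 5$ this is immediate, since $X$ is a fine moduli scheme with $\Gamma$ acting freely and no elliptic points. For $\GZ(N)$ the generic automorphism $-I$ forces all forms to have even weight, and the two displayed congruence conditions are precisely the conditions $\nu_2=\nu_3=0$ that $X$ have no elliptic points of order $2$ or $3$; under them $\som$ is again an honest line bundle on the smooth curve $X$, and I would view the even-weight algebra as the section ring of $\CL=\som^{\otimes 2}$.

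Next I would compute degrees. The Kodaira--Spencer isomorphism $\som^{\otimes 2}\cong\Omega^1_X(\text{cusps})$ gives $\deg\som^{\otimes 2}=2g-2+c$, hence $\deg\som^{\otimes k}=\tfrac{k}{2}(2g-2+c)$, where $g$ is the genus and $c$ the number of cusps. I would also record the elementary cusp counts $c\ge 4$ for $\GI(N)$ with $N\ge 5$ and $c\ge 2$ for $\GZ(N)$.

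With these in hand I apply Lemma~\ref{mumlema}(1): $\mu$ is onto as soon as $\deg\som^{\otimes a}\ge 2g+1$ and $\deg\som^{\otimes b}\ge 2g$. For $\GI(N)$, since $c\ge 4$ we have $\deg\som^{\otimes 2}=2g-2+c\ge 2g+1$ and $\ge 2g$, so $M_2\otimes M_{k-2}\to M_k$ is surjective for every $k\ge 4$; an induction on $k$ then writes every weight $\ge 4$ as $M_2\cdot M_{k-2}$ and bottoms out at weights $2$ and $3$. Since any product of two forms of weight $\ge 2$ has weight $\ge 4$, no form of weight $3$ is decomposable, so a weight-$3$ generator is forced, and this yields generation in weight $\le 3$. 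For $\GZ(N)$ only even weights occur; with $c\ge 2$ one has $\deg\som^{\otimes 4}\ge 2g+1$ and $\deg\som^{\otimes 2}\ge 2g$ outside a couple of degenerate low-genus configurations, so for $k\ge 8$ the decomposition $k=(k-4)+4$ makes $M_{k-4}\otimes M_4\to M_k$ surjective (the first factor clears $2g+1$, the second clears $2g$). Inducting then leaves generators only in weights $2,4,6$, i.e. generation in weight $\le 6$.

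The main obstacle is the low-degree regime. Lemma~\ref{mumlema}(1) requires $\deg\som^{\otimes a}\ge 2g+1$, and for curves of small genus with few cusps---most notably the two-cusp curves $X_0(p)$ and the rational modular curves---the relevant twist of $\som$ may fail to clear this bound at the smallest weights. These are finitely many configurations, which I would dispose of directly from the explicit dimension formulas for $M_k$ (equivalently, Riemann--Roch for $\som^{\otimes k}$); this is also where the precise value of the generating weight is pinned down. A secondary but essential point is the $\GZ(N)$ bookkeeping: verifying that the two congruence conditions are equivalent to $\nu_2=\nu_3=0$, and that after removing elliptic points the $-I$-action only suppresses odd weights while leaving $\som^{\otimes 2}$ an honest line bundle to which the degree computation and the lemma apply.
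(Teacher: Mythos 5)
Your proposal is correct and follows essentially the same route as the paper: Corollary \ref{gencorQ} is quoted there from \cite{rustom}, whose proof likewise identifies $M_k(\Gamma,\QQ)$ with $H^0(X,\CL^{\otimes k})$ (resp.\ $H^0(X,\CL^{\otimes k/2})$ for even $k$ in the $\GZ(N)$ case, the congruence conditions serving precisely to remove elliptic points) for a line bundle $\CL$ of sufficiently large degree on the modular curve, and then applies part (1) of Lemma \ref{mumlema} together with an induction on the weight. The only cosmetic difference is that you extract the degree bounds from Kodaira--Spencer and cusp counts, whereas the paper simply records $\deg \CL \geq g+1$ for $\GI(N)$ and $\deg \CL \geq 2g$ for $\GZ(N)$; note also that your ``degenerate low-genus configurations'' ($g=0$, $c=2$, i.e.\ $X_0(p)$ for $p\in\{2,3,5,7,13\}$) never satisfy the stated congruence conditions, so no separate treatment is actually needed.
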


We make the following additional observations.

\begin{rem}\label{remweights}\indent 
\begin{enumerate}
\item In \cite{rustom}, we used a weaker version of Lemma \ref{mumlema}, resulting in the upper bound $6$ on the weight in part (2) of Corollary \ref{gencorQ}. In fact, Lemma \ref{mumlema} allows us to show, using the same proof as in \cite{rustom}, that the bound in part(2) of Corollary \ref{gencorQ} can actually be taken to be $4$. 
\item It follows from the first part of Corollary \ref{gencorQ} that a minimal set of generators of $M(\GI(N),\QQ)$ must be the union of bases of the spaces $M_2(\GI(N),\QQ)$ and $M_3(\GI(N),\QQ)$, since no weight $2$ forms can give rise to forms in weight $3$. 
\end{enumerate}
\end{rem}

Consider the graded $\QQ$-algebra $M = M(\Gamma, \QQ)$ and pick a minimal set of generators $\{g_1,\cdots,g_n\}$ for it. This provides us with a map of graded algebras:
\[ \Phi: A \rightarrow M \]
\[ x_i \mapsto g_i \]
where $A = \QQ[x_1,\cdots,x_n]$ is the polynomial algebra where each $x_i$ is given the weight of $g_i$. We denote by $A_k$ the $\QQ$-vector space spanned by degree $k$ polynomials. We wish to examine generators of the homogeneous ideal $\ker \Phi$, which is the ideal of relations. In \cite{wagreich1} and \cite{wagreich2}, Wagreich describes the generators of the ideal of relations of $M$ when $M$ is generated by at most 4 generators. Our result is valid regardless of the number of generators involved.

\begin{thm} \label{relthm}
Choosing a minimal set of generators for $M = M(\Gamma,\QQ)$, the ideal of relations is generated:
        \begin{enumerate}
            \item in degree at most $6$ when $\Gamma = \Gamma_1(N)$ for $N \geq 5$, 
            \item  in degree at most $10$ when $\Gamma = \Gamma_0(N)$ for $N$ satisfying:
            \[N \equiv 0 \pmod{4} \mbox{ or } N \equiv 0 \pmod{p}, \mbox{ } p \equiv 3 \pmod{4} \]
             and:
            \[N \equiv 0 \pmod{9} \mbox{ or } N \equiv 0 \pmod{p}, \mbox{ } p \equiv 5 \pmod{6} \]
        \end{enumerate}
\end{thm}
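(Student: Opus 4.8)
The plan is to reinterpret the whole question on the modular curve and then apply Mumford's Lemma \ref{mumlema} one homological step beyond where it was used for generation in Corollary \ref{gencorQ}. Write $X = X(\Gamma)$ for the compactified modular curve over $\QQ$, of genus $g$, and let $\som$ be the modular line bundle, so that $M_k(\Gamma,\QQ) = H^0(X,\som^{\otimes k})$ for $k=0$ or $k\ge 2$, with $\deg \som^{\otimes k} = k\deg\som$. The congruence conditions on $N$ in the $\GZ(N)$ case are exactly what is needed here: the first display removes the elliptic points of order $2$ and the second those of order $3$, so that $X$ has no elliptic points, $\som$ is an honest line bundle, and the Kodaira--Spencer isomorphism $\som^{\otimes 2}\cong \Omega^1_X(\mathrm{cusps})$ gives $\deg\som = g-1+c/2$, where $c$ is the number of cusps. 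Since $\QQ$ is perfect and $X$ is geometrically connected over $\QQ$, Lemma \ref{mumlema} applies directly over $\QQ$, with no reduction modulo primes; I would record at the outset the numerics $\deg\som^{\otimes a} = a(g-1+c/2)$, as this is all that enters the degree hypotheses of the lemma.

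Next I would reduce the theorem to a vanishing statement for first syzygies. Fixing a minimal generating set in weights $\le 3$ for $\GI(N)$, or in the even weights $\le 6$ for $\GZ(N)$ (by Corollary \ref{gencorQ}; only even weights occur since $-I\in\GZ(N)$), we obtain $\Phi\colon A \to M$ with $A=\QQ[x_1,\dots,x_n]$ weighted, $x_i$ of weight $w_i$, and $I=\ker\Phi$. By graded Nakayama, $I$ is generated in degree at most $D_0$ if and only if $I_k=\sum_i x_i\, I_{k-w_i}$ for every $k>D_0$, i.e.\ $\mathrm{Tor}_1^A(M,\QQ)_k=0$. I would compute this $\mathrm{Tor}$ with the Koszul complex of $A$, so that the target becomes the homology of $M\otimes\Lambda^\bullet U$ at $M\otimes U$, where $U$ is the space of generators; the essential content is then that the kernels $R(\som^{\otimes a},\som^{\otimes b})$ of the multiplication maps $M_a\otimes M_b\to M_{a+b}$, together with the Koszul ``mixing'' differentials that pair distinct generator weights, are all accounted for by lower-weight data.

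The engine is Lemma \ref{mumlema}. Part (1), already used for generation, guarantees that in the relevant range the multiplications $M_a\otimes M_b\to M_{a+b}$ are surjective, so high-weight forms are decomposable and the bilinear relations among them factor through lower-weight relations. Part (2) propagates the irreducible bilinear relations: once $a\deg\som\ge 3g+1$ and both $b\deg\som$ and the multiplier degree are $\ge 2g+2$, the map $R(\som^{\otimes a},\som^{\otimes b})\otimes H^0(\som^{\otimes c})\to R(\som^{\otimes(a+c)},\som^{\otimes b})$ is onto. The key arithmetic point is that $\deg\som=g-1+c/2$ grows proportionally with $g$ as $N$ varies, so the thresholds $3g+1$ and $2g+2$ translate into weight thresholds bounded independently of $N$. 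For $\GI(N)$ one has $c\ge 4$, whence $2\deg\som\ge 2g+2$ and $3\deg\som\ge 3g+1$; thus weight-$2$ and weight-$3$ forms are admissible multipliers, both slots of every bilinear relation reduce to weight $3$, and the essential relations sit in $R(\som^{\otimes 3},\som^{\otimes 3})$, of degree $3+3=6$. For $\GZ(N)$ the restriction to even weights, together with the possibility of very few cusps — e.g.\ $X_0(p)$ with $p\equiv 11 \pmod{12}$ has only two cusps, where $2\deg\som=2g<2g+2$ so weight-$2$ forms fail the multiplier bound — raises the effective thresholds, and the homological bookkeeping then yields the larger bound $10$.

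The step I expect to be the main obstacle is making the passage from the bilinear modules $R(\som^{\otimes a},\som^{\otimes b})$ to minimal generators of the ideal $I$ completely precise, precisely in the low-weight slots. The naive propagation from Lemma \ref{mumlema}(2) cannot reduce a slot of weight below the $3g+1$ threshold, nor use a multiplier of degree below $2g+2$; for $\GZ(N)$ with few cusps the weight-$2$ forms violate both, so the weight-$2$ slots and the decomposability of high-weight forms must instead be handled through the Koszul ``mixing'' differentials in the full complex $M\otimes\Lambda^\bullet U$, rather than through part (2) applied to a single bilinear module. It is exactly this interplay that separates the sharp bounds $6$ and $10$ from the cruder estimate one gets by tracking the relation modules in isolation. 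A secondary obstacle is the uniform verification of the numerical thresholds across all admissible $N$ — confirming the cusp counts and checking by hand the finitely many small levels where $g$ or $c$ is too small for the asymptotic inequalities to hold.
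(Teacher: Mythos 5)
Your setup (the curve $X$, the bundle $\som$, the degree numerics, the observation that the congruence conditions kill the elliptic points, and the reformulation of the claim as vanishing of $\mathrm{Tor}_1^A(M,\QQ)_k$ above the bound) is all correct, and your diagnosis of why $\GZ(N)$ is harder than $\GI(N)$ is accurate. But the proposal has a genuine gap, and it is exactly the step you flag yourself as ``the main obstacle'': the passage from the bilinear modules $R(\som^{\otimes a},\som^{\otimes b})$ to generators of the ideal $I$ is never carried out, and this passage is the entire content of the theorem. An element of $I_k$ is a polynomial in the chosen generators, not a tensor $\sum f_i\otimes g_i$ with $\sum f_ig_i=0$; to invoke Lemma \ref{mumlema}(2) one must first convert a degree-$k$ relation $P$ into an element of some $R(\CL^{\otimes(k-a)},\CL^{\otimes a})$, and this conversion is only well defined up to lower-degree relations. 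The paper does precisely this bookkeeping: with $a=5$ if $k=7$ and $a=6$ otherwise, it writes $P=\sum_i Q_iv_i$ over a basis $\{v_i\}$ of $A_{k-a}$, reorders so that $\Phi(v_1),\dots,\Phi(v_m)$ is a basis of $M_{k-a}$ (absorbing the defect into relations $G_j$ of degree $k-a$), identifies what remains with an element of $R(\CL^{\otimes(k-a)},\CL^{\otimes a})$, and only then applies the surjection $R(\CL^{\otimes 3},\CL^{\otimes(k-a)})\otimes H^0(X,\CL^{\otimes(a-3)})\to R(\CL^{\otimes a},\CL^{\otimes(k-a)})$, which produces further correction relations $W_i$ of degree $a$. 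The upshot is that $P$ lies in the ideal generated by relations of degrees $k-(a-3)$, $a$ and $k-a$, all at most $k-2$, and descending induction gives the bound $6$. Without this (or an actual computation of the Koszul homology you invoke, which amounts to the same work), the numbers $6$ and $10$ remain assertions; in particular your claim that for $\GI(N)$ the essential relations ``sit in $R(\som^{\otimes 3},\som^{\otimes 3})$'' is not what the induction actually produces, and for $\GZ(N)$ the sentence ``the homological bookkeeping then yields the larger bound $10$'' is the statement to be proved, not an argument.

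On part (2), the paper's resolution is also much more pedestrian than the Koszul ``mixing differentials'' you propose: since $-I\in\GZ(N)$ only even weights occur, one works with the weight-$2$ bundle $\CL$ (so $M_k=H^0(X,\CL^{\otimes k/2})$, $\deg\CL\ge 2g$) and runs the very same reduction with weight-$4$ slots and weight-$4$ multipliers, via $R(\CL^{\otimes 2},\CL^{\otimes(k/2-4)})\otimes H^0(X,\CL^{\otimes 2})\to R(\CL^{\otimes 4},\CL^{\otimes(k/2-4)})$, whose hypotheses hold for $g\ge 1$ and even $k\ge 12$; every relation of even degree $\ge 12$ then reduces to degrees $k-4$, $8$, $k-8$, whence the bound $10$. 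This sidesteps entirely the failure of weight-$2$ multipliers that you correctly identify. Finally, note that genus-$0$ levels do satisfy your congruence conditions (e.g.\ $N=9$ and $N=18$), so they are not degenerate cases one can ignore or dispatch with the asymptotic inequalities; the paper handles them by citing \cite{TS11}, whereas your plan only gestures at checking small levels by hand.
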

\begin{proof}
For (1), as per Remark \ref{remweights}, if we choose a minimal set of generators, then there are no relations in degrees $2$ or $3$. Let $P \in A$ be a homogeneous polynomial of degree $k \geq 7$, representing a relation in $M$ in weight $k \geq 7$. As explained in \cite{rustom}, we have a line bundle $\CL$ on the modular curve $X = X_1(N)$ of genus $g$ such that:
\[ M_k = H^0(X, \CL^{\otimes k}) \] 
and $\deg \CL \geq g+1$ (when $N \geq 5$). \\
Let $a = 5$ if $k = 7$, and $a = 6$ otherwise. The polynomial $P$ is the sum of homogeneous monomials, each of degree $k$. Since $k \geq 7$, each of these monomials is divisible by a monomial of degree $a$. Let $\{v_1,\cdots,v_n\}$ be a basis of $A_{k-a}$. Thus we can write:
\[ P = \sum_{i = 1}^n Q_i v_i \] where for all $i$, $Q_i$ is a homogeneous polynomial of degree $a$. \\
For some $m$, and possibly after a reordering of the $v_i$'s, the set $\{\Phi(v_1),\cdots,\Phi(v_m)\}$ is a basis of $M_{k-a}$. This means that for every $j$ such that $m+1 \leq j \leq n$ there are constants $\alpha_{1,j},\cdots,\alpha_{m,j}$ such that:
\[ \Phi(v_j) = \sum_{i=1}^{m} \alpha_{i,j} \Phi(v_i).\]
Hence letting 
\[G_j := v_j - \sum_{i=1}^{m} \alpha_{i,j} v_i\]
for $m+1 \leq j \leq n$, we have $\Phi(G_j) = 0$, i.e., the $G_j$'s are relations in weight $k-a$. Now if for $1 \leq i \leq m$ we set:
\[Q_i' := Q_i + \sum_{j=m+1}^n \alpha_{i,j} Q_j, \] 
we can rewrite $P$ as:
\[ P = \sum_{i=1}^m Q_i' v_i + \sum_{j=m+1}^{n} G_j Q_j, \]
where, since $\Phi(P) = 0$ and $\Phi(\sum_{j=m+1}^{n} G_j Q_j) = 0$, we have $\Phi(\sum_{i=1}^m Q_i' v_i) = 0$.
We see then that $\sum_{i=1}^m Q_i' v_i$ must be represented in $R(\CL^{\otimes(k-a)},\CL^{\otimes a})$, that is:
\[\sum_{i=1}^m \Phi(Q_i')\otimes \Phi(v_i) \in R(\CL^{\otimes(k-a)},\CL^{\otimes a}).\] We have then the following diagram:

\[\begin{tikzpicture}[xscale=3,yscale=-1.5]
 \node (Z) at (0,0) {$0$};
 \node (R) at (0.65,0) {$R(\CL^{\otimes a},\CL^{\otimes(k-a)})$};
 \node (M) at (2.1,0) {$H^0(X,\CL^{\otimes a})\otimes H^0(X,\CL^{\otimes(k-a)})$};
 \node (N) at (3.4,0) {$H^0(X,\CL^{\otimes k}) $};
 \node (R2) at (0.65,1) {$R(\CL^{\otimes 3},\CL^{\otimes (k-a) })\otimes H^0(X,\CL^{\otimes (a-3)})$};

 \draw [->] (Z) --  (R);
 \draw [->] (R) -- (M);
 \draw [->] (M) -- (N);
 \draw [->] (R2) -- node [left]{$\epsilon$} (R);
\end{tikzpicture}\]
 By Lemma \ref{mumlema}, the map $\epsilon$ is surjective. Thus for each $i$, we can find polynomials $H_{s}$ in degree $3-a+k \leq k-2$, and polynomials $F_{i,s}$ in degree $a-3$, satisfying:
 
 \[\Phi(\sum_{i=1}^m F_{i,s} v_i) = 0\]
 and such that:
\[ \sum_{i=1}^m \Phi(\sum_s H_{s} F_{i,s}) \otimes \Phi(v_i) = \sum_{i=1}^m \Phi(Q_i')\otimes \Phi(v_i), \] so that:
\[ \Phi(\sum_s H_{s} F_{i,s}) = \Phi(Q_i'), \] hence
\[ Q_i' = \sum_s H_{s} F_{i,s} + W_i \]
where for all $i$, $\Phi(W_i) = 0$, i.e. $W_i$ is a relation in weight $a$. Putting all the above together, we get:
\[ P = \sum_{i=1}^m (\sum_s H_{s} F_{i,s})v_i + \sum_{i=1}^{m} W_i v_i + \sum_{j=m+1}^{n} G_j Q_j \]
so $P$ can be written in terms of relations of degrees $k-(a-3)$, $a$, and $k-a$, which are all $\leq k-2$. \\

For (2), suppose that $N$ satisfies the given congruence conditions. When the genus $g$ of the modular curve $X = X_0(N)$ is 0, the statement follows from the results of \cite{TS11}. So suppose that the modular curve $X = X_0(N)$ is of genus $g \geq 1$. We have a line bundle $\CL$ on $X$ such that for even $k$:
\[ M_k = H^0(X,\CL^{\otimes k/2}) \]
(since the conditions on $N$ remove any elliptic points and irregular cusps), and $\deg \CL \geq 2g$. We can then repeat the same argument as above since for even $k \geq 10$, we have the following diagram: 

\[\begin{tikzpicture}[xscale=3,yscale=-1.5]
 \node (Z) at (-0.35,0) {$0$};
 \node (R) at (0.40,0) {$R(\CL^{\otimes 4},\CL^{\otimes(k/2-4)})$};
 \node (M) at (2,0) {$H^0(X,\CL^{\otimes 4})\otimes H^0(X,\CL^{\otimes(k/2-4)})$};
 \node (N) at (3.4,0) {$H^0(X,\CL^{\otimes k/2}) $};
 \node (R2) at (0.40,1) {$R(\CL^{\otimes 2},\CL^{\otimes (k/2-4) })\otimes H^0(X,\CL^{\otimes 2})$};

 \draw [->] (Z) --  (R);
 \draw [->] (R) -- (M);
 \draw [->] (M) -- (N);
 \draw [->] (R2) -- node [left]{$\epsilon$} (R);
\end{tikzpicture}\]

where the map $\epsilon$ is surjective by Lemma \ref{mumlema}. Thus any relation of degree $\geq 12$ can be written as a combination of relations of degrees $k-4$, $8$, and $k-8$. 
\end{proof}
We now turn to the case of modular forms of level $\GI(N)$ over $\ZZ[\frac{1}{N}]$. First we consider the situation in positive characteristic. Note that while in characteristic 0 any relation must be a homogeneous polynomial in the generators (see \cite{miyake}, Lemma 2.1.1), in positive characteristic one might have non-homogeneous relations (for example, the famous $E_{p-1} \equiv 1 \pmod{p}$). Here we restrict our attention to homogeneous relations. Recall that (cf. \cite{G90}) when $N \geq 5$, the functor representing generalized elliptic curves over $\ZZ[\frac{1}{N}]$ with a choice of a point of exact order $N$ is representable by a fine moduli $\ZZ[\frac{1}{N}]$-scheme $X_1(N)$. On this scheme there is an invertible sheaf, which we denote by $\omega$. The modular forms of level $\GI(N)$ over $\ZZ[\frac{1}{N}]$ are just global sections of tensor powers of this invertible sheaf:
\[ M_k(\GI(N), \ZZ[\frac{1}{N}]) = H^0 ( X_1(N), \omega^{\otimes k}). \]
When $p \nmid N$, the scheme $X_1(N)$ admits a good reduction 
\[X_1(N)_{\FF_P} := X_1(N) \otimes \FF_p,\]
and the invertible sheaf $\omega$ also pulls back to an invertible sheaf $\omega_{\FF_p}$, and by base change theorems, we have the following identification (for $k \geq 2)$:
\[ M_k(\GI(N),\FF_p) = H^0(X_1(N)_{\FF_p}, \omega_{\FF_p}^{\otimes k}). \]

\begin{lem}\label{modprels}
Let $N \geq 5$, and $p \nmid N$ be a prime. Choosing a minimal set of generators of $M(\GI(N),\FF_p)$, the homogeneous relations are generated in degree at most $6$.
\end{lem}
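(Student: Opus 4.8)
The plan is to transpose the proof of Theorem \ref{relthm}(1) essentially verbatim from $\QQ$ to $\FF_p$, so the first task is to assemble the geometric dictionary in characteristic $p$. Since $p \nmid N$, the scheme $X_1(N)$ has good reduction $X := X_1(N)_{\FF_p}$, a smooth, geometrically connected curve over the perfect field $\FF_p$ of the same genus $g$ as its characteristic-$0$ fibre. The sheaf $\omega$ specializes to $\omega_{\FF_p}$, and because degree is locally constant in the flat family $X_1(N)/\ZZ[\frac{1}{N}]$ we retain the crucial inequality $\deg \omega_{\FF_p} \geq g+1$. The base-change identification $M_k(\GI(N),\FF_p) = H^0(X,\omega_{\FF_p}^{\otimes k})$ for $k \geq 2$ is already recorded above, and since $\FF_p$ is perfect, Lemma \ref{mumlema} applies on $X$ exactly as stated.

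Next I would pin down the shape of a minimal generating set. Applying Lemma \ref{mumlema}(1) to the splitting $\omega_{\FF_p}^{\otimes k} = \omega_{\FF_p}^{\otimes(k-2)}\otimes\omega_{\FF_p}^{\otimes 2}$ shows the multiplication $M_2 \otimes M_{k-2} \to M_k$ is surjective for every $k \geq 4$, since $\deg \omega_{\FF_p}^{\otimes 2} = 2\deg\omega_{\FF_p} \geq 2g+2 \geq 2g$ and $\deg \omega_{\FF_p}^{\otimes(k-2)} \geq 2g+2 \geq 2g+1$. Hence $M(\GI(N),\FF_p)$ is generated in weight at most $3$, just as over $\ZZ[\frac{1}{N}]$ in \cite{rustom}. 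The parity observation of Remark \ref{remweights}(2) survives over $\FF_p$: weight-$2$ forms cannot combine to produce weight-$3$ forms, so a minimal set of homogeneous generators consists precisely of bases of $M_2$ and $M_3$, giving variables of weight $2$ and $3$ and no homogeneous relations in degrees $2$ or $3$. This furnishes the base of the induction.

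For the inductive step I would take a homogeneous relation $P$ of degree $k \geq 7$ and run the argument of Theorem \ref{relthm}(1) unchanged: set $a = 5$ if $k=7$ and $a = 6$ otherwise, write $P = \sum_i Q_i v_i$ with $\deg Q_i = a$ and $\{v_i\}$ a basis of $A_{k-a}$, split off the relations $G_j$ coming from linear dependencies in $M_{k-a}$, and reduce to showing that $\sum_i \Phi(Q_i')\otimes\Phi(v_i)$ lies in the image of $\epsilon : R(\omega_{\FF_p}^{\otimes 3},\omega_{\FF_p}^{\otimes(k-a)})\otimes H^0(X,\omega_{\FF_p}^{\otimes(a-3)}) \to R(\omega_{\FF_p}^{\otimes a},\omega_{\FF_p}^{\otimes(k-a)})$. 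Lemma \ref{mumlema}(2) gives the surjectivity of $\epsilon$, as $\deg\omega_{\FF_p}^{\otimes 3} \geq 3g+3 \geq 3g+1$ and $\min\{\deg\omega_{\FF_p}^{\otimes(k-a)},\deg\omega_{\FF_p}^{\otimes(a-3)}\} \geq 2g+2$. Exactly as before, this expresses $P$ as a combination of homogeneous relations of degrees $k-(a-3)$, $a$, and $k-a$, all at most $k-2$, completing the induction and bounding by $6$ the degrees of a generating set of homogeneous relations.

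The step I expect to require the most care is not the algebra but the verification that the characteristic-$0$ bookkeeping never appeals to weights outside the range where $M_j = H^0(X,\omega_{\FF_p}^{\otimes j})$ is valid. Since this base-change identity fails in weights $0$ and $1$, I would confirm that every cohomology group invoked sits in weight $\geq 2$: indeed $k-a \geq 2$, $a-3 \in \{2,3\}$, $a \geq 5$, and $k-(a-3)\geq 5$, so the dictionary is never strained. I would also flag the one genuine departure from characteristic $0$ stressed above: over $\FF_p$ the ideal $\ker\Phi$ may contain non-homogeneous elements (reflecting congruences such as $E_{p-1}\equiv 1 \pmod p$), so—unlike the $\QQ$ case, where homogeneity is automatic by \cite{miyake}, Lemma 2.1.1—one cannot control all of $\ker\Phi$ by this method. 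Confining attention to \emph{homogeneous} relations is precisely what lets the graded induction run, and the resulting bound of $6$ matches Theorem \ref{relthm}(1).
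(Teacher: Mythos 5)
Your proposal is correct and follows exactly the paper's route: the paper's own proof simply observes that $\FF_p$ is perfect and that $\deg \omega_{\FF_p} = \deg \omega$ (citing Proposition 1 of \cite{rustom}), so that the argument of Theorem \ref{relthm}(1) goes through unchanged on $X_1(N)_{\FF_p}$. Your write-up is a faithful, more detailed expansion of that same argument, including the useful (if implicit in the paper) checks that all weights invoked stay $\geq 2$ and that only homogeneous relations are controlled.
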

\begin{proof}
As $\FF_p$ is perfect, and $\deg \omega_{\FF_p} = \deg \omega$ (this is shown in the proof of Proposition 1 in \cite{rustom}), the argument in Theorem \ref{relthm} goes through unchanged. 
\end{proof}
As above, suppose we have a minimal set of generators $\{g_1,\cdots,g_n\}$ for $M = M(\GI(N),\ZZ[\frac{1}{N}])$, and let $A = \ZZ[\frac{1}{N}][x_1,\cdots,x_n]$ be the weighted polynomial algebra where each $x_i$ is given the weight of $g_i$. Again, the homogeneous ideal $\ker \Phi$ is finitely generated. This provides us with a map of graded algebras:
\[ \Phi: A \rightarrow M \]
\[ x_i \mapsto g_i. \]
So for every weight $k$, there is a short exact sequence:
\[\begin{tikzpicture}[xscale=3,yscale=-1.5]
 \node (Z1) at (0,0) {$0$};
 \node (R) at (0.65,0) {$\ker(\Phi)_k$};
 \node (A) at (1.3,0) {$A_k$};
 \node (M) at (2,0) {$M_k$};
 \node (Z2) at (2.65,0) {$0$};

 \draw [->] (Z1) --  (R);
 \draw [->] (R) -- (A);
 \draw [->] (A) -- node[above]{$\Phi$} (M);
 \draw [->] (M) -- (Z2);
\end{tikzpicture}\] 
where the subscript $k$ indicates weight $k$ homogeneous submodule. For $p\nmid N$, the map $\Phi$ induces a morphism $\bar{\Phi}: A_k \otimes \FF_p \rightarrow M_k \otimes \FF_p$. Since $\FF_p$ is flat over $\ZZ[\frac{1}{N}]$, so we have a commutative diagram with exact rows:
\[\begin{tikzpicture}[xscale=3,yscale=-1.5]
 \node (Z1) at (0,0) {$0$};
 \node (R) at (0.8,0) {$\ker(\Phi)_k $};
 \node (A) at (1.6,0) {$A_k$};
 \node (M) at (2.4,0) {$M_k$};
 \node (Z2) at (3.2,0) {$0$};
 \node (Z1p) at (0,1) {$0$};
 \node (Rp) at (0.8,1) {$\ker(\Phi)_k \otimes \FF_p$};
 \node (Ap) at (1.6,1) {$A_k \otimes \FF_p$};
 \node (Mp) at (2.4,1) {$M_k \otimes \FF_p$};
 \node (Z2p) at (3.2,1) {$0$};
 
 \draw [->] (Z1) --  (R);
 \draw [->] (R) -- (A);
 \draw [->>] (A) -- node[above]{$\Phi$}(M);
 \draw [->] (M) -- (Z2);
  \draw [->] (Z1p) --  (Rp);
 \draw [->] (Rp) -- (Ap);
 \draw [->>] (Ap) -- node[above]{$\bar{\Phi}$}(Mp);
 \draw [->] (Mp) -- (Z2p);
 
 \draw [->] (R) -- node[right]{$\alpha$}(Rp);
 \draw [->>] (A) -- node[right]{$\beta$}(Ap);
 \draw [->>] (M) -- node[right]{$\gamma$}(Mp);
\end{tikzpicture}\]
As the map $\alpha$ in the above diagram is surjective, this shows that any mod $p$ homogeneous relation of degree $k$ between the generators must be the reduction mod $p$ of a relation in characteristic $0$. That is, we have shown the following lemma:
\begin{lem}\label{relred} The mod $p$ relations of degree $k$ between the generators are precisely the elements of $\ker(\Phi)_k \otimes \FF_p$, the reductions mod $p$ of relations of degree $k$ in characteristic 0.
\end{lem}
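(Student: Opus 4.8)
The plan is to extract the lemma from the commutative diagram already assembled above, the only substantive point being to justify that the bottom row is genuinely exact and not merely right exact. First I would recall the defining short exact sequence of weight-$k$ pieces,
\[ 0 \to \ker(\Phi)_k \to A_k \xrightarrow{\Phi} M_k \to 0, \]
which by construction presents $\ker(\Phi)_k$ as the $\ZZ[\frac{1}{N}]$-module of degree-$k$ relations in characteristic $0$. Tensoring this sequence with $\FF_p$ over $\ZZ[\frac{1}{N}]$ is right exact, yielding the exact sequence $\ker(\Phi)_k \otimes \FF_p \to A_k \otimes \FF_p \xrightarrow{\bar\Phi} M_k \otimes \FF_p \to 0$; exactness at the middle term says precisely that the image of the left-hand arrow equals $\ker(\bar\Phi)_k$, the module of mod $p$ relations in degree $k$.

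Next I would note that the reduction map $\alpha\colon \ker(\Phi)_k \to \ker(\Phi)_k \otimes \FF_p$ is surjective, as is the canonical map $P \to P \otimes \FF_p$ for any $\ZZ[\frac{1}{N}]$-module $P$. Chaining the two observations, an arbitrary mod $p$ relation lies in $\ker(\bar\Phi)_k$, which is the image of $\ker(\Phi)_k \otimes \FF_p$, and pulling it back along $\alpha$ exhibits it as the reduction mod $p$ of a bona fide relation in $\ker(\Phi)_k$. This is exactly the identification asserted by the lemma.

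The step demanding care — and the only real obstacle — is the claim that the reduced sequence is short exact, equivalently that $\ker(\Phi)_k \otimes \FF_p$ injects into $A_k \otimes \FF_p$ so as to be identified with $\ker(\bar\Phi)_k$. This injectivity is the vanishing of $\mathrm{Tor}_1^{\ZZ[\frac{1}{N}]}(M_k,\FF_p)$, and I would secure it through the flatness of $M_k$ rather than of $\FF_p$ (the latter being a torsion, hence non-flat, $\ZZ[\frac{1}{N}]$-module). Concretely, $M_k = H^0(X_1(N),\omega^{\otimes k})$ is finitely generated and torsion-free, being a module of sections of a line bundle on the integral scheme $X_1(N)$ that is flat over $\ZZ[\frac{1}{N}]$; since $\ZZ[\frac{1}{N}]$ is a localization of $\ZZ$ and hence a principal ideal domain, a finitely generated torsion-free module over it is free, so $M_k$ is flat and the Tor group vanishes. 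With this the two reductions $\beta,\gamma$ and the exact bottom row fit into the displayed commutative diagram with exact rows, completing the proof.
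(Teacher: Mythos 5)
Your argument is correct and follows the same overall strategy as the paper: reduce the weight-$k$ presentation $0 \to \ker(\Phi)_k \to A_k \xrightarrow{\Phi} M_k \to 0$ modulo $p$, identify the mod $p$ relations $\ker(\bar\Phi)_k$ with (the image of) $\ker(\Phi)_k \otimes \FF_p$, and use surjectivity of $\alpha \colon \ker(\Phi)_k \to \ker(\Phi)_k \otimes \FF_p$ to lift any mod $p$ relation to characteristic $0$. Where you genuinely diverge --- and in fact improve on the paper --- is in the justification of the one nontrivial point, the exactness of the reduced row. The paper asserts this ``since $\FF_p$ is flat over $\ZZ[\frac{1}{N}]$,'' which is false as stated: $\FF_p = \ZZ[\frac{1}{N}]/(p)$ is a torsion $\ZZ[\frac{1}{N}]$-module, hence not flat, and tensoring with it is only right exact. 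The correct reason is the one you give: the injectivity of $\ker(\Phi)_k \otimes \FF_p \to A_k \otimes \FF_p$ amounts to the vanishing of $\mathrm{Tor}_1^{\ZZ[\frac{1}{N}]}(M_k,\FF_p)$, and this holds because $M_k$ is finitely generated and torsion-free (e.g.\ via its $q$-expansion embedding into $\ZZ[\frac{1}{N}][[q]]$) over the PID $\ZZ[\frac{1}{N}]$, hence free and flat; equivalently, the presentation splits because $M_k$ is projective. So your proposal reaches the same diagram with exact rows that the paper uses, but supplies the justification that the paper's proof should have given.
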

We now proceed to prove:
\begin{thm}\label{relthmZZ}
Let $N \geq 5$. Choosing a minimal set of generators for $M = M(\GI(N),\ZZ[\frac{1}{N}])$, the ideal of relations is generated in degree at most $6$.
\end{thm}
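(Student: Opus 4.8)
The plan is to compare the ideal of relations over $\ZZ[\frac{1}{N}]$ with its reductions modulo every prime $p \nmid N$, transporting the degree-$6$ bound from characteristic $p$ back to $\ZZ[\frac1N]$ via Lemmas \ref{modprels} and \ref{relred}. Write $I = \ker\Phi$ for the ideal of relations, and let $J \subseteq I$ be the subideal of $A$ generated by all homogeneous relations of degree at most $6$, that is, by $\bigcup_{k \leq 6} I_k$. By construction $J \subseteq I$, so it suffices to prove $I = J$, and since both are graded it suffices to show $(I/J)_k = 0$ for every $k$. Each $(I/J)_k$ is a finitely generated module over the principal ideal domain $\ZZ[\frac1N]$, so by the structure theorem it is enough to show $(I/J)_k \otimes \FF_p = 0$ for every prime $p \nmid N$: the free part of $(I/J)_k$ would survive modulo any such $p$, and a torsion summand supported at $\pi$ would survive modulo $\pi$, so vanishing modulo every prime forces $(I/J)_k = 0$. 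One could alternatively invoke Theorem \ref{relthm} to see directly that $(I/J)_k \otimes \QQ = 0$, i.e. that $(I/J)_k$ is torsion, and use the mod $p$ step only to annihilate this torsion.

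Fix $p \nmid N$. First I would record that, since each $M_k(\GI(N),\ZZ[\frac1N])$ is free over $\ZZ[\frac1N]$, the chosen minimal generators lie in weights $2$ and $3$ (weight $\le 3$ by the main theorem of \cite{rustom}, and weight $3$ forms cannot be products of lower-weight forms), and that their reductions form a minimal set of generators of $M(\GI(N),\FF_p)$. Thus the mod $p$ picture is governed by the same polynomial ring $A \otimes \FF_p$ and the surjection $\bar{\Phi}$ appearing in the diagram preceding Lemma \ref{relred}. Freeness of $M_k$ also gives $\mathrm{Tor}_1(M_k,\FF_p) = 0$, so the reduction map $I_k \otimes \FF_p \to A_k \otimes \FF_p$ is injective with image $\ker(\bar{\Phi})_k$; this is precisely the identification $\ker(\bar{\Phi})_k = \ker(\Phi)_k \otimes \FF_p$ of Lemma \ref{relred}.

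Now I would chase images inside $A \otimes \FF_p$. The image of $I$ is $\bigcup_k \ker(\bar{\Phi})_k = \ker\bar{\Phi}$, because $\bar{\Phi}$ is surjective. The image of $J$ is the ideal of $A \otimes \FF_p$ generated by the degree-$\leq 6$ parts of $\ker\bar{\Phi}$, and by Lemma \ref{modprels} these already generate all of $\ker\bar{\Phi}$. Hence $I$ and $J$ have the same image $\ker\bar{\Phi}$ in $A \otimes \FF_p$. Since the reduction $I_k \otimes \FF_p \to A_k \otimes \FF_p$ is injective, this equality of images forces the natural map $J \otimes \FF_p \to I \otimes \FF_p$ to be surjective. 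Applying right exactness of $-\otimes\FF_p$ to $J \to I \to I/J \to 0$ then yields $(I/J)\otimes\FF_p = 0$ in every degree. As $p \nmid N$ was arbitrary, the structure-theorem step gives $I/J = 0$, which is the claim.

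The step to execute with care, and the main obstacle, is the image chase of the previous paragraph: one must verify that the reductions of the fixed degree-$\leq 6$ generators of $J$ really produce a generating set of $\ker\bar{\Phi}$ (so that Lemma \ref{modprels} applies to the reductions of our chosen $\ZZ[\frac1N]$-generators, and not merely to some abstract minimal system over $\FF_p$), and that the injectivity coming from freeness lets one deduce surjectivity of $J \otimes \FF_p \to I \otimes \FF_p$ from the equality of images. The remainder is local-global bookkeeping over the PID $\ZZ[\frac1N]$, whose only essential inputs are Lemma \ref{modprels}, Lemma \ref{relred}, and the freeness of the graded pieces $M_k$.
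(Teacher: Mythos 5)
Your proposal is correct, but it finishes the argument by a genuinely different mechanism than the paper. Both proofs share the same skeleton: reduce modulo every prime $p \nmid N$ and feed in Lemma \ref{relred} together with Lemma \ref{modprels} (and both proofs — yours explicitly, the paper's tacitly — need the observation that the reductions of the chosen generators, which are bases of $M_2$ and $M_3$ over $\ZZ[\frac{1}{N}]$, again form a minimal generating system of $M(\GI(N),\FF_p)$, so that Lemma \ref{modprels} applies to this particular presentation). The divergence is in how one returns from characteristic $p$ to $\ZZ[\frac{1}{N}]$. The paper argues by contradiction and $p$-adic successive approximation: a putative relation $B_0$ of degree $k \geq 7$ outside the submodule generated by the low-degree relations $R_1,\dots,R_m$ is expressed mod $p$ in terms of the $\bar{R}_i$, the congruence is lifted and iterated, yielding $B_0 = \sum_i F_i R_i$ with $F_i \in \ZZ_p[x_1,\dots,x_n]$ for every $p \nmid N$; the paper then invokes Lemma 4 of \cite{rustom} to descend these $p$-adic coefficients to $\ZZ[\frac{1}{N}]$, contradicting the choice of $B_0$. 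You instead package everything into the graded quotient $I/J$: each $(I/J)_k$ is a finitely generated module over the PID $\ZZ[\frac{1}{N}]$, the equality of images in $A \otimes \FF_p$ combined with the injectivity of $I_k \otimes \FF_p \to A_k \otimes \FF_p$ (Tor vanishing from freeness of $M_k$) forces $J \otimes \FF_p \to I \otimes \FF_p$ to be surjective, hence $(I/J) \otimes \FF_p = 0$, and the structure theorem kills $I/J$ since its reduction at every prime of $\ZZ[\frac{1}{N}]$ vanishes. What each approach buys: yours is more self-contained and avoids both the passage to $\ZZ_p$ and the external descent lemma, replacing them with standard commutative algebra on a quotient module; the paper's iteration is more explicit, runs parallel to the proof of Theorem 1 of \cite{rustom}, and never needs to form quotients, but it does lean on that paper's Lemma 4 for the final local-to-global step. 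Your image-chase step — the one you flag as the main obstacle — does hold up under scrutiny, precisely because $J$ is generated by homogeneous elements, so its image is the ideal generated by $\bigcup_{k \leq 6} \ker(\bar{\Phi})_k$, which Lemma \ref{modprels} identifies with all of $\ker\bar{\Phi}$.
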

\begin{proof}
We argue as in the proof of Theorem 1 in \cite{rustom}. Let $R_1,\cdots,R_m$ be the relations that generate $\ker\Phi$ up to degree $6$. By Lemma \ref{relred} and Lemma \ref{modprels}, the reductions $\bar{R}_1,\cdots,\bar{R}_m$ generate the homogeneous relations mod $p$. Let $B_0$ be a relation in degree $ k \geq 7$. Suppose for the sake of contradiction that $B_0$ is not in the submodule of $\ker\Phi$ generated by $R_1,\cdots,R_m$. As in the proof of Theorem \ref{relthm}, we can assume $R_1,\cdots,R_m$ have weights in $\{k-(a-3), a, k-a\}$, where $a = 5$ if $k = 7$, and $a = 6$ otherwise. The reduction mod $p$ of $B_0$ can be written as:
\[ \bar{B}_0 = \sum_{i=1}^m \bar{F}^{(0)}_i \bar{R_i} \]
where $\bar{F}^{(0)}_i$ have weights $\geq 2$, so that the mod $p$ modular forms represented by these polynomials have lifts to characteristic $0$. Let $F^{(0)}_i \in \ZZ[\frac{1}{N}][x_1,\cdots,x_n]$ denote a lift of $\bar{F}^{(0)}_i$. By our assumption, 
\[ B_0 - \sum_{i=1}^m F^{(0)}_i R_i \not = 0\]
Then there must be some nonzero polynomial $B_1$ such that:
\[ B_0 - \sum_{i=1}^m F^{(0)}_i R_i = pB_1.\]
Since $B_0$ and $R_i$ are relations of degree $k$, it follows that $B_1$ is itself a relation of degree $k$, and we can repeat the process with $B_1$:
\[B_1 - \sum_{i=1}^m F^{(1)}_i R_i = pB_2\] for some $B_2, F^{(1)}_i \in \ZZ[\frac{1}{N}][x_1,\cdots,x_n]$. Iterating this, we have:
\[ B_0 = \sum_{i=1}^m F_i R_i \]
where $F_i \in \ZZ_{p}[x_1,\cdots,x_n]$. This holds for each $p \nmid N$, so by Lemma 4 of \cite{rustom}, we conclude that for each $i$, $F_i \in \ZZ[\frac{1}{N}][x_1,\cdots, x_n]$, whence a contradiction.
\end{proof}

\section{Modular forms with coefficients in $\ZZ$}
\subsection{The lower bound}\label{lowerbound}
A classical known fact is that $M(SL_2(\ZZ),\CC) = \CC[E_4,E_6]$, and one can easily show the stronger statement that $M(SL_2(\ZZ),\ZZ[\frac{1}{6}]) = \ZZ[\frac{1}{6}][E_4, E_6]$, see Theorem 6.3 in \cite{kilford}. If one wishes to restrict consideration to modular forms with coefficients in $\ZZ$, then we find that $M(SL_2(\ZZ),\ZZ)$ is generated by $E_4, E_6$ and $\Delta$, and in fact (see \cite{delignetate}):
\[M(SL_2(\ZZ),\ZZ) \cong \ZZ[E_4,E_6, \Delta]/(E_4^3 - E_6^2 - 1728\Delta). \]
We see that the generating weight increases from $6$ to $12$ when we pass to the ring $\ZZ$. The purpose of this section is to show that the increase in the generating weight when passing to $\ZZ$ is a general phenomenon.\\
Consider a level $N \geq 1$ and an odd prime $p$ dividing $N$ exactly once. We can then define the Atkin-Lehner involution:
\[ W^N_p = \begin{pmatrix} p & a \\ N & bp  \end{pmatrix} \]
where $a$ and $b$ are any integers such that $\det W_p^N = p^2 b - Na = p$.
First we need the following lemma, due to Kilbourn (see \cite{kilbourn}). This lemma generalizes a result obtained in prime level in \cite{DR}.
\begin{lem}\label{killema}
Let $N \geq 1$ and let $p$ be an odd prime dividing $N$ exactly. Then for all $k \leq p-3$, :
\[ |v_p(f|W^N_p) - v_p(f)| \leq k/2. \]
\end{lem}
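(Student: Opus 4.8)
The plan is to first exploit the fact that $W_p^N$ is an involution in order to collapse the two-sided estimate into a single inequality, and then to read that inequality off the Deligne--Rapoport model at $p$. Since $\det W_p^N = p$ and $(W_p^N)^2 = p\gamma$ for some $\gamma \in \GZ(N)$, a direct computation with the normalized action shows $(f|W_p^N)|W_p^N = f$, so $-|W_p^N$ is a genuine involution on $M_k(\GZ(N))$. As $v_p$ is unchanged under scaling $f$ by a power of $p$ and $-|W_p^N$ is $\QQ$-linear, it suffices to prove the one inequality $v_p(f|W_p^N) \geq v_p(f) - k/2$ for every $f$: applying it to $h = f|W_p^N$ and using $h|W_p^N = f$ yields $v_p(f|W_p^N) \leq v_p(f) + k/2$, and the two bounds combine to give the lemma. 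After normalizing $v_p(f) = 0$, the target becomes the integrality statement $v_p(\tilde f) \geq 0$ with $\tilde f := p^{k/2}\, f|W_p^N$; that is, $\tilde f \in \ZZ_p[[q]]$.

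To establish this integrality I would pass to the Deligne--Rapoport semistable model $\mathcal{X}$ of $X_0(N)$ over $\ZZ_p$ (recall $p \| N$): its special fiber $\mathcal{X}_s$ is a union $Z \cup Z'$ of two copies of $X_0(N/p)_{\FF_p}$ glued transversally at the supersingular points, and $W_p^N$ exchanges the two components along with the cusps $\infty \in Z$ and $0 \in Z'$. A weight-$k$ form is a section of $\omega^{\otimes k}$, and $v_p(f)$ and $v_p(f|W_p^N)$ measure the orders to which the reduction of $f$ vanishes along $Z$ and along $Z'$ respectively, the factor $p^{k/2}$ in $\tilde f$ being exactly the twist produced by $\det W_p^N = p$ on $\omega^{\otimes k}$. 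In this language $v_p(\tilde f) \geq 0$ asserts precisely that $f$ cannot vanish along $Z'$ to order exceeding $k/2$ while remaining nonzero along $Z$.

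The coupling between the two components is governed by the Hasse invariant $A \in H^0(\mathcal{X}_s, \omega^{\otimes(p-1)})$, whose divisor is the reduced supersingular locus $Z \cap Z'$. The decisive point is that a form of weight $k \leq p-3 < p-1$ has weight too small to be divisible by $A$ on either component, so the only interaction between $\bar f|_Z$ and $\bar f|_{Z'}$ is the matching condition at the crossing points; estimating that interaction bounds the vanishing of $f$ along $Z'$ by half its weight, producing exactly the constant $k/2$. Moreover $v_p(f)$ and $v_p(f|W_p^N)$ depend only on the formal geometry at the two cusps and at the supersingular crossings, where solely the $\GZ(p)$-structure enters and the tame level $N/p$ serves merely to index the components; this is what lets one transport the prime-level computation of \cite{DR} to arbitrary level $N$ and obtain the stated generalization.

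I expect the main obstacle to be the local analysis at the supersingular crossing points: one must quantify precisely how vanishing of $\bar f$ on one component forces growth of $p^{k/2} f|W_p^N$ on the other, pin the resulting constant to exactly $k/2$, and verify that the argument genuinely requires the \emph{strict} bound $k < p-1$ (equivalently $k \leq p-3$) so that no full power of the Hasse invariant can be absorbed. A secondary technical point will be checking that $W_p^N$ is compatible with the integral structure of $\omega^{\otimes k}$ after the $p^{k/2}$ normalization, i.e.\ that the component-swap it induces on $\mathcal{X}_s$ is the correct one; once these local computations are in place, the global bound and the reduction to prime level should follow formally.
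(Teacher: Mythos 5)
The paper never proves Lemma \ref{killema} at all: it is imported from Kilbourn \cite{kilbourn}, the prime-level case being the result of \cite{DR} that the paper cites, and the remark following Theorem \ref{congthm} makes clear that Kilbourn's own proof is an \emph{elementary} trace-operator argument, not a geometric one. So your route is necessarily different from the cited source, and must be judged on its own terms. Its first step is correct and complete: $(W_p^N)^2 = p\gamma$ with $\gamma \in \GZ(N)$, so $-|W_p^N$ is an involution on $M_k(\GZ(N))$, and the two-sided bound reduces to the single inequality $v_p(\tilde{f}) \geq 0$ whenever $v_p(f) = 0$.

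The geometric half, however, has a genuine gap, in two places. First, your dictionary between valuations and vanishing orders is wrong. When $v_p(f)=0$, the vanishing order of $f$ along the component $Z'$ through the cusp $0$ is $v_p(\tilde{f}) - k$, not $v_p(f|W_p^N)$; this is exactly the identification the paper itself uses in the proof of Theorem \ref{congthm}, where $v_p(\tilde{f}) = k+a$ corresponds to vanishing order $a$ along $N_2$. The shift by $k$ comes from an asymmetry that your ``two symmetric copies swapped by $W_p^N$'' picture misses: $Z'$ is identified with $X_0(N/p)_{\FF_p}$ via $(E,C)\mapsto E/C$, the universal curve over $Z'$ is a Frobenius twist, and consequently $\deg(\omega|_{Z'}) = p\cdot\deg(\omega|_{Z})$. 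With the correct dictionary, $v_p(\tilde{f})\geq 0$ says that $f$ has no \emph{pole} of order $>k$ along $Z'$ --- not that ``$f$ cannot vanish along $Z'$ to order exceeding $k/2$'' --- so the constant $k/2$ cannot arise in the way you describe, and an argument built on your translation would establish a statement that is not the lemma. Second, the quantitative core is simply absent, as your own closing paragraph concedes: non-divisibility by the Hasse invariant does not by itself bound anything. What actually closes the argument (working on the stack, as the paper does, since $\omega$ does not live on the coarse curve) is: write $\mathrm{div}(f) = D + aZ'$ with $D$ horizontal and effective, set $\delta = \deg(\omega|_Z)$, and use $(\mathrm{div}(f),Z) = k\delta$, $(\mathrm{div}(f),Z') = kp\delta$, $(Z,Z') = (p-1)\delta$, $(Z',Z') = -(p-1)\delta$ (the fiber $Z+Z'$ being principal); effectivity of $D$ then gives $-kp/(p-1) \leq a \leq k/(p-1)$, and since $a \in \ZZ$ and $k \leq p-3$ this forces $-k \leq a \leq 0$, i.e.\ $0 \leq v_p(\tilde{f}) \leq k$, which is the lemma. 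This intersection count is where $k \leq p-3$ genuinely enters (via $k/(p-1) < 1$ and $kp < (k+1)(p-1)$); your Hasse-invariant numerology, namely that the number of crossing points equals $\deg(\omega^{\otimes(p-1)}|_Z)$, is the right count, but it functions through this degree computation and the Frobenius asymmetry, not through a divisibility statement about $\bar{f}$ on the components.
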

For convenience, we will make the following definition.
\begin{defn}\label{definv}
Let $N$ and $p$ be as in Lemma \ref{killema}, $k \geq 0$ and $f \in M_k(\GZ(N),\QQ)$. Then we define the following operator:
\[ \tilde{f} := \tilde{\omega}(f) := p^{k/2} f|W^N_p. \]
\end{defn}
Then we have a corollary of Lemma \ref{killema}:
\begin{cor}\label{kilcor}
Let $N$ and $p$ be as in Lemma \ref{killema}, $0 \leq k \leq p-3$, and $f \in M_k(\GZ(N),\QQ)$. Then:
\[ v_p(\tilde{f}) \geq v_p(f). \]
In particular, if $v_p(f) = 0$, then $v_p(\tilde{f}) \geq 0$. 
\end{cor}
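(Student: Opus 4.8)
The plan is to reduce the statement directly to Lemma \ref{killema} by unwinding the definition of $\tilde{f}$ and tracking the $p$-adic valuation through the normalizing scalar. First I would observe that since $-I \in \GZ(N)$, every nonzero form in $M_k(\GZ(N),\QQ)$ has even weight (odd-weight forms vanish identically), so the factor $p^{k/2}$ appearing in Definition \ref{definv} is an honest integer power of $p$ rather than an irrational scalar; the case $f = 0$ is trivial. Because $v_p$ is, by construction, the infimum of the coefficient-wise $p$-adic valuations, it is additive under multiplication by a scalar $c \in \QQ$. Applying this to $\tilde{f} = p^{k/2}\, f|W^N_p$ gives the identity $v_p(\tilde{f}) = v_p(p^{k/2}) + v_p(f|W^N_p) = \tfrac{k}{2} + v_p(f|W^N_p)$.

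Next I would invoke Lemma \ref{killema}, whose hypothesis $k \leq p-3$ is exactly the range we are working in. The bound $|v_p(f|W^N_p) - v_p(f)| \leq k/2$ contains, in particular, its lower half, namely $v_p(f|W^N_p) \geq v_p(f) - k/2$. Substituting this into the identity above yields $v_p(\tilde{f}) \geq \tfrac{k}{2} + v_p(f) - \tfrac{k}{2} = v_p(f)$, which is precisely the desired inequality. The final clause of the corollary then follows by specializing to $v_p(f) = 0$.

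I do not expect a genuine obstacle here: the entire content lies in the observation that the $+k/2$ contributed by the normalizing factor $p^{k/2}$ exactly absorbs the worst-case drop of $k/2$ in valuation permitted by the Atkin--Lehner operator. The only points demanding a moment's care are that we use just one of the two inequalities concealed in the absolute value of Lemma \ref{killema} (the lower bound on $v_p(f|W^N_p)$), and that evenness of the weight guarantees $p^{k/2}$ is a true power of $p$ so that the additivity of $v_p$ applies cleanly.
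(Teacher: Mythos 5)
Your proof is correct and is exactly the argument the paper leaves implicit: unwinding $\tilde{f} = p^{k/2}\, f|W^N_p$, using additivity of $v_p$ under scalar multiplication, and applying the lower half of the inequality in Lemma \ref{killema} so that the factor $p^{k/2}$ absorbs the worst-case drop in valuation. Your extra remark that evenness of the weight (forced by $-I \in \GZ(N)$) makes $p^{k/2}$ a genuine integer power is a careful touch the paper does not spell out, but it is the same approach.
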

We will prove the following:
\begin{thm}\label{genZ} Let $N \geq 5$ and let $p \geq 5$ be a prime which divides $N$ exactly once. Then any set of generators for $M(\Gamma_0(N),\ZZ)$ contains a form of weight $p-1$. In particular, the generating weight of $M(\Gamma_0(N),\ZZ)$ is at least $p-1$. 
\end{thm}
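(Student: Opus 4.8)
The plan is to exhibit a single form of weight $p-1$ with coefficients in $\ZZ$ that cannot be written as a $\ZZ$-polynomial in forms of weight $\le p-3$, and then to argue that this forces every generating set to contain a weight-$(p-1)$ generator. The form I would use is the $T$-form $T = (\eta(pz)^p/\eta(z))^2$, viewed in $M_{p-1}(\GZ(N),\ZZ)$ via the inclusion $\GZ(N) \subset \GZ(p)$; its $q$-expansion lies in $\ZZ[[q]]$ with unit leading coefficient, so $v_p(T) = 0$.

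The invariant that detects $T$ is $v_p(\tilde f)$. Two formal properties of the operator $f \mapsto \tilde f$ drive everything. First, it is multiplicative: from $(fg)|W^N_p = (f|W^N_p)(g|W^N_p)$ and the additivity of the normalizing exponents $p^{k/2}$ one gets $\widetilde{fg} = \tilde f\,\tilde g$. Second, the Gauss valuation $v_p$ is multiplicative on $q$-expansions. Consequently, if $f = f_1 \cdots f_j$ with each $f_i \in M_{k_i}(\GZ(N),\ZZ)$ and $k_i \le p-3$, then
\[ v_p(\tilde f) = \sum_{i=1}^j v_p(\tilde f_i) \ge \sum_{i=1}^j v_p(f_i) \ge 0, \]
where the first inequality is Corollary \ref{kilcor} applied to each factor (legitimate precisely because $k_i \le p-3$) and the second holds because each factor has integral $q$-expansion. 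Extending to $\ZZ$-linear combinations by the ultrametric inequality, every element $F$ of the degree-$(p-1)$ part of the $\ZZ$-subalgebra of $M(\GZ(N),\ZZ)$ generated by forms of weight $\le p-3$ satisfies $v_p(\tilde F) \ge 0$. Note that weight $p-1$ is exactly the first weight to which Corollary \ref{kilcor} no longer applies, which is why the argument cannot be short-circuited by applying it directly to $f$.

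Next I would show $v_p(\tilde T) < 0$, so that $T$ escapes this subalgebra. The $T$-form should realize the failure of Corollary \ref{kilcor} at weight $p-1$, and a direct transformation computation confirms this: applying the Fricke/Atkin-Lehner involution to the $\eta$-quotient and using $\eta(-1/w) = (-iw)^{1/2}\eta(w)$ yields $\tilde T = \pm\, p^{-1}(\eta(z)^p/\eta(pz))^2$. Since $(\eta(z)^p/\eta(pz))^2$ again has integral $q$-expansion with unit leading coefficient, its valuation is $0$, and hence $v_p(\tilde T) = -1 < 0$. As a consistency check, one has $\widetilde{\tilde f} = p^k f$ for every $f$ of weight $k$ (because $W^N_p$ squares to a scalar times an element of $\GZ(N)$), which is compatible with the computed value.

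The conclusion is then immediate. Suppose, for contradiction, that some generating set of $M(\GZ(N),\ZZ)$ contained no form of weight $p-1$. Generators of weight exceeding $p-1$ contribute nothing in degree $p-1$, and there are no odd-weight forms on $\GZ(N)$, so in degree $p-1$ the algebra would be generated entirely by forms of weight $\le p-3$; by the previous paragraphs this forces $v_p(\tilde f) \ge 0$ for every $f \in M_{p-1}(\GZ(N),\ZZ)$, contradicting $v_p(\tilde T) = -1$. Hence every generating set contains a form of weight $p-1$, and in particular the generating weight is at least $p-1$. The one genuinely delicate point, and the step I expect to be the main obstacle, is the evaluation $v_p(\tilde T) = -1$ at level $N$: the operator $f\mapsto\tilde f$ is defined through $W^N_p$, whereas the clean $\eta$-quotient computation lives at prime level, so one must check that $W^N_p$ differs from the prime-level Fricke involution only by a $p$-adic unit and a substitution preserving $v_p$ (equivalently, that the Atkin-Lehner operators are compatible with the degeneracy map $\GZ(p)\subset\GZ(N)$ through which $T$ is pulled back). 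Everything else is formal manipulation of the valuation.
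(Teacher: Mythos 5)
Your proposal is correct and follows essentially the same route as the paper: exhibit the $T$-form with $v_p(\tilde{T}) = -1$ (computed via the $\eta$-transformation under the Atkin--Lehner matrix, up to a sign and a factor $p^{-1}$), then use the multiplicativity and additivity of $f \mapsto \tilde{f}$ together with Corollary \ref{kilcor} (valid exactly for weights $\leq p-3$) to show that any polynomial in lower-weight integral forms has $v_p(\widetilde{\;\cdot\;}) \geq 0$, a contradiction. The one point you flag as delicate --- relating $W^N_p$ to the prime-level Fricke computation --- is precisely what the paper handles by the explicit matrix decomposition $W^N_p = M_p\gamma$ and the eta multiplier, and it works out as you expect.
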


\begin{proof}
The idea of the proof is to produce a modular form in weight $p-1$ that cannot be written as a polynomial with $\ZZ$ coefficients in modular forms with $\ZZ$ coefficients in weights $< p-1$. Recall (see \cite{rustom}) the $T$-form $T$, given by:
\[ T(z) := \left(\frac{\eta(pz)^p}{\eta(z)}\right)^2 \in M_{p-1}(\GZ(p)) \subset M_{p-1}(\GZ(N))\]
We recall also that both $T$ and $T^{-1}$ have $q$-expansion coefficients in $\ZZ$. It is also obvious that $v_p(T) = 0$. The truth of Theorem \ref{genZ} then clearly follows from the following lemma:
\begin{lem}\label{vptlem}
The form $T$ is not a polynomial with $\ZZ$ coefficients in modular forms with $\ZZ$ coefficients in weights $<p-1$.
\end{lem}

\begin{proof}
We will prove the lemma by showing that $T$ violates the inequality in Lemma \ref{killema}, and that every modular form which is a polynomial in forms of weight $< p -1$ must satisfy the inequality.\\
We define the following matrices:
\[ M_p = \begin{pmatrix} 1 & a \\ \frac{N}{p} & pb  \end{pmatrix}, \]
\[ M_p' = \begin{pmatrix} p & a \\ \frac{N}{p} & b  \end{pmatrix} \]
\[ \gamma = \begin{pmatrix} p & 0 \\ 0 & 1  \end{pmatrix}. \]
We note that $W^N_p = M_p \gamma$, and that $\gamma W^N_p = p M_p'$, so $\gamma W^N_p$ acts on modular forms via the $-|_k$ operator in the same way as $M_p'$ does. We with to compute $v_p(\tilde{T})$ where $\tilde{T} = p^{\frac{p-1}{2}}T|W_p^N$. We have:
\[\tilde{T}(z) = p^{p-1} (Nz + pb)^{-(p-1)} \left(\frac {\eta(M_p' \cdot z)^p}{\eta(W^N_p \cdot z)}\right)^2. \]
By applying the appropriate transformation formulae for the $\eta$ function (see for instance \cite{koehler}), we have:

\[\eta(M_p' \cdot z)^2 = p \nu_\eta(M_p')^2 (Nz + pb) \eta(z)^2, \]

\[\eta(W^N_p \cdot z)^2 = \nu_\eta (M_p)^2 (Nz + pb) \eta(p\cdot z)^2, \]

where $\nu_\eta(-)$ is the eta multiplier. After writing out the multipliers explicitly, we find that:

\[ \tilde{T}(z) = \epsilon p^{-1} \left(\frac{\eta(z)^p}{\eta(pz)}\right)^2 \]
where:
\[ \epsilon = \begin{cases} e^{\left(\frac{2Np - 6p - 2N/p + 6}{24}\right)} & \mbox{ if } N/p \in 2\ZZ \\
e^{\left(\frac{2Np - 6N + 4N/p}{24}\right)} & \mbox{ otherwise }\end{cases}, \]
and $e(z) = e^{2i\pi z}$. It is is show to show that $\epsilon = \pm 1$, and hence that $v_p(\tilde{T}) = -1$. All we need to show is that 
\[ Np - 3p - N/p + 3 \equiv 0 \pmod{6} \mbox{  if } \frac{N}{p} \equiv 0 \pmod{2} \]
and that 
\[ Np - 3N + 2N/p \equiv 0 \pmod{6}  \mbox{  if } \frac{N}{p} \equiv 1 \pmod{2}. \]
Indeed, we have $p \equiv 1 \pmod{2}$ and $p \equiv \pm 1 \pmod{3}$, so if $\frac{N}{p} \equiv 0 \pmod{2}$, then:
\[\begin{cases} Np - 3p - N/p + 3 \equiv 0 \pmod{2} & \mbox{, and} \\
               Np - 3p - N/p + 3 \equiv 0 \pmod{3}.
\end{cases}\] Similarly, if $\frac{N}{p} \equiv 1 \pmod{2}$, then:
\[\begin{cases} Np - 3N + 2N/p \equiv 0  \pmod{2} & \mbox{, and} \\
               Np - 3N + 2N/p  \equiv 0 \pmod{3}.
\end{cases}\]
To finish the proof, note that the operator $\tilde{\omega}$ of Definition \ref{definv} defines an operator on the graded algebra of modular forms:
\[ \widetilde{fg} = \tilde{f} \cdot \tilde{g}, \]
\[ \widetilde{f+g} = \tilde{f}+ \tilde{g}. \]
Suppose now that 
\[T = \sum c_{i_1,\cdots,i_n} g_1^{i_1} \cdots g_n^{i_n}\] where $c_i \in \ZZ$ and $g_i \in M(\GZ(N),\ZZ)$ are modular forms in weights $\leq p-3$. Then: 
\[\tilde{T} = \sum c_{i_1,\cdots,i_n} (\tilde{g_1})^{i_1} \cdots (\tilde{g_n})^{i_n},\]
which would force $v_p(\tilde{T}) \geq 0$ by Corollary \ref{kilcor}, but that contradicts the above computation of $v_p(\tilde{T})$. 
\end{proof}
We have now established Lemma \ref{vptlem}, and Theorem \ref{genZ} follows immediately.
\end{proof}
In \cite{rustom}, we raised the question of the lowest weight in which one could find a $T$-form (see Definition \ref{deftform}) for $\GZ(p)$, and we proved that the lowest weight is either $p-1$ or $\frac{p-1}{2}$. As a corollary to Lemma \ref{vptlem}, we have the following:
\begin{cor}
Let $p \geq 5$. The lowest weight in which one can find a $T$-form for $\GZ(p)$ is $p-1$.
\end{cor}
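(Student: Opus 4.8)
The plan is to exploit the dichotomy already established in \cite{rustom}: the lowest weight of a $T$-form for $\GZ(p)$ is either $p-1$ or $\frac{p-1}{2}$. Since the weight-$(p-1)$ eta quotient $T$ exhibited above is itself a $T$-form, it suffices to rule out weight $\frac{p-1}{2}$. So I would argue by contradiction, assuming $S \in M_{(p-1)/2}(\GZ(p),\ZZ)$ is a $T$-form, and show that necessarily $S^2 = T$. This would express $T$ as a polynomial (the monomial $x^2$) in a modular form of weight $\frac{p-1}{2} < p-1$ with coefficients in $\ZZ$, contradicting Lemma \ref{vptlem}.

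First I would observe that $S^2 \in M_{p-1}(\GZ(p),\ZZ)$ is again a $T$-form: it vanishes only at $\infty$ because $S$ does, its $q$-expansion lies in $\ZZ[[q]]$, and $(S^2)^{-1} = (S^{-1})^2 \in \ZZ((q))$. Thus both $S^2$ and $T$ are weight-$(p-1)$ holomorphic modular forms on $\GZ(p)$ whose only zero is the cusp $\infty$. Consider the quotient $F := S^2/T$, a meromorphic modular function of weight $0$, i.e. a meromorphic function on the compact Riemann surface $X_0(p)$. Since neither $S^2$ nor $T$ vanishes anywhere on the upper half-plane nor at the cusp $0$, the function $F$ is holomorphic and non-vanishing away from $\infty$, so its divisor is supported at the single rational point $\infty$. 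As every principal divisor on $X_0(p)$ has degree $0$, this forces $\mathrm{div}(F) = 0$, whence $F$ is a nonzero constant $c$ and $S^2 = cT$.

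Next I would pin down $c$ by comparing leading $q$-expansion coefficients. Because $S$ is a $T$-form over $\ZZ$, both $S \in \ZZ[[q]]$ and $S^{-1} \in \ZZ((q))$, so the leading coefficient of $S$ is a unit in $\ZZ$, hence $\pm 1$; thus $S^2$ has leading coefficient $1$. On the other hand $T = (\eta(pz)^p/\eta(z))^2$ is a product of eta quotients with leading coefficient $1$. Comparing in $S^2 = cT$ gives $c = 1$, so $S^2 = T$. This displays $T$ as the value at $S$ of the monomial $x^2$ with coefficient in $\ZZ$, where $S$ is a $\ZZ$-form of weight $\frac{p-1}{2} < p-1$, contradicting Lemma \ref{vptlem}. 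Hence no $T$-form of weight $\frac{p-1}{2}$ exists and the lowest weight is $p-1$.

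The step I expect to demand the most care is the divisor computation for $F$: one must be certain that ``vanishes only at $\infty$'' genuinely excludes zeros at the elliptic points of $X_0(p)$ (present when $p \equiv 1 \bmod 4$ or $p \equiv 1 \bmod 3$) and at the cusp $0$, so that $\mathrm{div}(F)$ really is concentrated at $\infty$. The saving observation is that $S^2$ and $T$ have equal weight $p-1$, so whatever vanishing is forced at elliptic points is identical for both and cancels in the quotient $F$, which is exactly what makes $F$ a genuine everywhere-regular, nowhere-vanishing function off $\infty$ and thus constant. I would also note in passing that this argument in fact shows the eta quotient $\eta(pz)^p/\eta(z)$ cannot be a modular form on $\GZ(p)$, which disposes of the weight-$\frac{p-1}{2}$ possibility over $\CC$ as well.
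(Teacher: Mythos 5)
Your proof is correct, but it takes a different route from the paper's. The paper never invokes the dichotomy from \cite{rustom}: it supposes $T'$ is \emph{any} $T$-form of weight $<p-1$, argues from the defining properties of $T$-forms that $T'$ divides $T$ in the algebra of modular forms with quotient $T''=T/T'$ again a $T$-form of weight $<p-1$, and then the factorization $T=T'T''$ contradicts Lemma \ref{vptlem} directly. You instead use the dichotomy to reduce to weight $\frac{p-1}{2}$, square the hypothetical form $S$, and prove the exact identity $S^2=T$ by the degree-zero-principal-divisor argument on $X_0(p)$, reaching the same contradiction with Lemma \ref{vptlem}. Both proofs thus bottom out in the same key lemma, differing only in how the forbidden factorization of $T$ is produced. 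What each buys: the paper's argument is shorter and self-sufficient --- it rules out every weight below $p-1$ without needing to know the only alternative is $\frac{p-1}{2}$, so you could drop your dependence on the dichotomy entirely by running your divisor argument on $T/T'$ instead of on $S^2/T$; conversely, your write-up makes fully explicit the geometric verification that the paper compresses into the phrase ``by the defining properties of $T$-forms'' (why the ratio of the two forms is regular, why it is constant, and why the constant is a unit, using $S^{-1}\in\ZZ((q))$ to see the leading coefficient is $\pm 1$), and it yields the sharper byproduct that any weight-$\frac{p-1}{2}$ $T$-form would have to be a square root of $T$, which is what dooms the eta quotient $\eta(pz)^p/\eta(z)$. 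Your handling of the elliptic points and the cusp $0$ is the right level of care: since both forms have equal weight and vanish only at $\infty$, the quotient is an honest nowhere-vanishing function away from $\infty$, so the divisor really is concentrated at one point.
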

\begin{proof}
Let $T$ be the $T$-form in weight $p-1$ defined above. Suppose there exists a $T$-form $T'$ of lower weight. By the defining properties of $T$-forms, it follows that $T'$ divides $T$ in the algebra of modular forms, and that $\frac{T}{T'} = T''$ is a $T$-form in weight $< p-1$. Then $T = T' T''$, but this contradicts Lemma \ref{vptlem}. 
\end{proof}

\subsection{Intersection theory on $\CM_{\GZ(p)}$}\label{intheory}
For the convenience of the reader, we summarize here the main definitions regarding the intersection theory on the stacks $\CM_{\GZ(p)}$ studied in \cite{DR}. For our purpose, we only need the theory developed in \cite{DR}. However, the appendix by Brian Conrad in \cite{BDP} contains a more explicit and more general treatement of the intersection theory on such stacks.\\
The stack $\CM_{\GZ(p)}$ is the moduli stack classifying elliptic curves (over $\ZZ$) with a choice of a subgroup of order $p$. These stacks are Deligne-Mumford (DM); the main property of DM stacks that we need is that they admit finite \'etale covers by schemes. Thus one can define sheaves on them as sheaves on the \'etale site, in particular, these stacks are locally ringed, and one can use the intersection theory of schemes to define intersection concepts on the stacks. For the definition and basic properties of these stacks, see \cite{DM}.\\
The stack $\CM_{\GZ(p)}$ is not representable, i.e.\ it is not a scheme, since every pair $(E,C)$ consisting of an elliptic curve $E$ and a subgroup $C$ of order $p$ admits at least one non-trivial automorphism (the involution corresponding to $-1 \in \GZ(p)$). It is regular (\cite{DR},Th\'eor\`eme V.1.16), of dimension 2 (of relative dimension $1$) over $\Spec(\ZZ)$.\\
Let $\CM$ be such a stack, and let $\CL$ be an invertible sheaf on $\CM$. As in \cite{DR}, VI.4.3, the degree of $\CL$ is defined as follows. Suppose that $\CL$ has a rational section $f$. Pick a geometric fiber (for example, say it is $\CM\otimes k$ where $k$ is an algebraically closed field), and at each closed geometric point $x$ of this fiber, define:
\[ \deg_x (f) = \begin{cases} \dim_k \widetilde{O_x}/(f) & \mbox{if } f \mbox{ is regular at } x \\ - \dim_k \widetilde{O_x}/(f^{-1}) & \mbox{ otherwise}\end{cases}\]
where $\widetilde{O_x}$ is the henselian local ring of the fiber at $x$. Then the degree of $\CL$ is defined by:
\[\deg \CL = \sum_x \frac{deg_x(f)}{|Aut(x)|} \]
where $Aut(x)$ is the automorphism group of the elliptic curve represented by the point $x$. This degree is independent of the choice of the fiber. 
\\
A reduced irreducible closed substack of codimension 1 is Cartier (Lemma B.2.2.8 in \cite{BDP}). A Cartier divisor is effective if the ideal sheaf associated to the corresponding closed substack is invertible. If $D$ is an effective Cartier divisor, there is an invertible sheaf $\CO(D)$ associated to it that has a canonical regular global section $s_D$. By regularity of the stack, the henselian local ring at every codimension 1 point is a DVR, thus to every effective Cartier divisor one can associate an effective Weil divisor (i.e. a finite formal integral combination of closed reduced irreducible substacks of co-dimension 1, where all the coefficients are non-negative). For an invertible sheaf $\CL$ on $\CM$, and a global section $s$ of $\CL$ that is non-zero on every connected component of $\CM$, we can associate a Weil divisor $div(f)$ such that there is an isomorphism of sheaves $\CO(div(f)) \cong \CL$. \\
Thus we can identify the concepts of an effective Cartier divisor and an effective Weil divisor. Given an invertible sheaf $\CL$ with a global section $s$ which is non-zero on every connected component, its divisor $D = div(f)$ can be written as the sum of horizontal and vertical divisors. If $N$ is an irreducible component of a geometric fiber, seen as a vertical divisor (by giving it the reduced structure), then one can define the intersection number:
\[ (D,N) := \deg \CO(D)|_N. \]
The degree of $\CL$ can equally be defined as the intersection number of the divisor of a global section with a geometric fibral divisor.\\
The stack classifiying generalized elliptic curves (without a choice of a structure) is denoted $\CM_1$. It is shown in \cite{DR} that the reduction mod $p$, $\CM_{\GZ(p)} \otimes \FF_p$ consists of two copies of $\CM_1 \otimes \FF_p$ glued together at the supersingular points. \\
Similar to the case of the moduli schemes, on each of the stacks $\CM_{\GZ(p)}$ and $\CM_1$, there is an invertible sheaf $\omega$ such that the modular forms (of levels $p$ and $1$ respectively) of weight $k$ can be seen as global sections of $\omega^{\otimes k}$. 

\subsection{Congruences of level $1$ and level $p$ modular forms}
For this section, fix a prime $p \geq 5$. We look at congruence relations between modular forms on $SL_2(\ZZ)$ and on $\GZ(p)$, that is, congruences between their formal $q$-expansions at infinity. In \cite{serre}, Serre proves that every modular form in $M(\GZ(p),\ZZ)$ is $p$-adically of level $1$, that is, if $f \in M_k(\GZ(p),\ZZ)$ for some $k$, then for every integer $i > 0$, there exists an integer $k_i$ and a modular form $f_i \in M_{k_i}(SL_2(\ZZ),\ZZ)$ such that $f \equiv f_i \pmod{p^i}$. In particular one has the following result. Let $v = v_p(\tilde{f})$, and let:
\[ 
    \Es = E_{p-1} - \tilde{E}_{p-1}.
\]
The form $\Es$ has the following properties: $\Es \equiv 1 \pmod{p}$ and $v_p(\widetilde{\Es})=p$. If $v \leq k$, this implies that $tr(f(\Es)^{k-v}) \in M_{kp - v(p-1)}(SL_2(\ZZ))$ is $p$-integral and is congruent to $f$ modulo $p$. Here, $tr$ is the trace operator sending modular forms of level $p$ to modular forms of level $1$; for the definition and properties, see \cite{serre}.\\
When $v > k$, if the above congruence still holds, then we expect to see $f$ mod $p$ in weight $kv - v(p-1) < k$. Since this weight is less than $k$, Serre's trace argument apparently no longer applies. The aim of this section is to show that a similar congruence relation still holds even when the ``expected weight" for $f$ is less than $k$. That is, we have:
\begin{thm}\label{congthm}
Let $p \geq 5$, $f \in M_k(\GZ(p),\ZZ)$ with $v_p(f) = 0$ and $v_p(\tilde{f}) = k + a$. Then there exists $g \in M_{k-a(p-1)}(SL_2(\ZZ),\ZZ)$ such that $f \equiv g \pmod{p}$. 
\end{thm}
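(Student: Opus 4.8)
The plan is to realize $f$ as a global section of $\omega^{\otimes k}$ on the regular two-dimensional Deligne--Mumford stack $\CM = \CM_{\GZ(p)}$ over $\ZZ_{(p)}$, and to read the congruence off from the divisor of this section using the intersection theory summarized in Section \ref{intheory}. Recall that the special fiber $\CM \otimes \FF_p = C_\infty \cup C_0$ is a union of two copies of $\CM_1 \otimes \FF_p$ meeting transversally at the supersingular points, and that $\operatorname{div}(p) = C_\infty + C_0$. I would write $\operatorname{div}(f) = H + m_\infty C_\infty + m_0 C_0$, where $H$ is the effective horizontal closure of the divisor of $f$ on the generic fiber and $m_\infty,m_0\ge 0$ are the multiplicities of $f$ along the two fibral components. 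The cusp $\infty$ lies on $C_\infty$, and the $q$-expansion there is the reduction of the $q$-expansion of $f$; hence the hypothesis $v_p(f)=0$ says precisely that $m_\infty=0$, so that $g_\infty := \bar f|_{C_\infty}$ is a nonzero mod $p$ modular form of level $1$ and weight $k$ (as $C_\infty \cong \CM_1\otimes\FF_p$) whose $q$-expansion is $\bar f$.

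The crux is to convert the second hypothesis $v_p(\tilde f) = k+a$ into control of $m_0$. Since $\tilde{\omega}$ is, up to the normalizing factor $p^{k/2}$, the Atkin--Lehner involution $W^p_p$, it acts on $\CM$ by an involution interchanging $C_\infty \leftrightarrow C_0$ and fixing the supersingular points. Tracking $\operatorname{div}(\tilde f)$ through this involution, together with $\operatorname{div}(p^{k/2}) = \tfrac{k}{2}(C_\infty + C_0)$ and the integral comparison between $W_p^*\omega$ and $\omega$ (which differs only by a fibral divisor), I expect to obtain the identity $v_p(\tilde f) = k + m_0$, i.e. $m_0 = a$. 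I would verify this normalization by two sanity checks: the resulting degree inequality $a\,\deg\bigl(\operatorname{div}(A)\bigr) = a(p-1)\deg(\omega|_{C_\infty}) \le k\,\deg(\omega|_{C_\infty})$ on $C_\infty$ must reproduce the bound $k \ge a(p-1)$ of Proposition 3.20 of \cite{DR}; and the special case $f=\Es$, for which $v_p(\widetilde{\Es})=p$ and $\Es\equiv 1\pmod p$, must force $m_0=1$, consistent with $\overline{\Es}$ vanishing on $C_0$.

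Granting $m_0=a$, I restrict $\operatorname{div}(f)=H+aC_0$ to $C_\infty$. Because $C_0$ and $C_\infty$ meet transversally exactly at the supersingular points, $C_0|_{C_\infty}=\sum_{x\ \mathrm{ss}} x$, so $\operatorname{div}(g_\infty) = H|_{C_\infty} + a\sum_{x\ \mathrm{ss}} x$; in particular $g_\infty$ vanishes to order at least $a$ at every supersingular point. The Hasse invariant $A\equiv E_{p-1}\pmod p$ is a weight $p-1$ form of level $1$ with $\operatorname{div}(A)=\sum_{x\ \mathrm{ss}} x$ and $q$-expansion $\equiv 1$, so $h := g_\infty/A^a$ is a genuine holomorphic section, i.e. $h \in M_{k-a(p-1)}(SL_2(\ZZ),\FF_p)$, with the same $q$-expansion as $g_\infty$, namely $\bar f$. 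The bound $k\ge a(p-1)$ guarantees this weight is non-negative.

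Finally, for $p\ge 5$ the reduction map $M_{k-a(p-1)}(SL_2(\ZZ),\ZZ)\to M_{k-a(p-1)}(SL_2(\ZZ),\FF_p)$ is surjective (by the structure $M(SL_2(\ZZ),\ZZ)=\ZZ[E_4,E_6,\Delta]/(E_4^3-E_6^2-1728\Delta)$), so $h$ lifts to some $g\in M_{k-a(p-1)}(SL_2(\ZZ),\ZZ)$ with $\bar g = h$; then $g\equiv g_\infty\equiv f\pmod p$ on $q$-expansions, which is the desired congruence. When $a\le 0$ the vanishing condition is vacuous, $g_\infty$ already has filtration at most $k$, and one multiplies a lift by $E_{p-1}^{-a}\equiv 1$ to reach weight $k-a(p-1)$, recovering Serre's case. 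The main obstacle is the computation of the second paragraph: making precise, within the intersection theory of \cite{DR}, the identity $v_p(\tilde f)=k+\operatorname{ord}_{C_0}(f)$. This demands careful bookkeeping of the factor $p^{k/2}$, of the integral discrepancy between $W_p^*\omega$ and $\omega$, and of the local contributions of $H$ at the two cusps; once it is in place, everything downstream is formal.
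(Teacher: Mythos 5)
Your proposal is correct in substance, but after the common setup it finishes by a genuinely different mechanism than the paper's proof. Both arguments begin identically: $f$ is viewed as a section of $\omega^{\otimes k}$ on the regular stack $\CM_{\GZ(p)}$, and the two hypotheses are converted into $\operatorname{div}(f)=D+aN_2$ with $D$ horizontal and effective, where $N_1,N_2$ are the components of the special fiber through the cusps $\infty$ and $0$ (your $C_\infty$, $C_0$); note that the paper asserts this conversion with no more justification than you give, so the step you flag as "the main obstacle" is not a gap relative to the paper, and your identity $v_p(\tilde f)=k+\operatorname{ord}_{N_2}(f)$ is in fact correct: it follows from $\operatorname{div}(\tilde f)=w^*\operatorname{div}(f)+kN_1$, where $w$ is the Atkin--Lehner involution of the stack and $kN_1$ is the divisor of the comparison map $(\pi^*)^{\otimes k}\colon w^*\omega^{\otimes k}\to\omega^{\otimes k}$ induced by the universal $p$-isogeny, whose divisor is exactly $N_1$ because $\pi^*\hat\pi^*=p$, the special fiber is reduced, and $\pi^*$ vanishes precisely where the universal subgroup is connected. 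From that point on the paper does \emph{not} divide by the Hasse invariant: it computes the intersection number $(D,N_1)=\frac{k-a(p-1)}{24}$, assumes for contradiction that $f\bmod p\notin(\Es)^a\cdot(V\otimes\FF_p)$ with $V=M_{k-a(p-1)}(SL_2(\ZZ),\ZZ)$, forces $v_{\infty,p}(f)\ge\dim V$ by subtracting Victor Miller basis elements, and derives a contradiction from the valence formula and the dimension formula. Your route---restrict to $N_1\cong\CM_1\otimes\FF_p$, observe that effectivity of $D$ makes the restriction vanish to order $\ge a$ at each supersingular point, divide by $A^a$ using Igusa's theorem that $A$ has simple zeros and $q$-expansion $1$, then lift---is the classical filtration-lowering argument: it is constructive, shorter, treats $a\le 0$ uniformly, and disposes automatically of the case $k-a(p-1)=2$ that the paper must exclude by hand (your target space $H^0(\CM_1\otimes\FF_p,\omega^{\otimes 2})$ is zero, so that case simply cannot arise). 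What the paper's contradiction argument buys in exchange is that it never needs to lift a geometric mod $p$ form to characteristic zero, since the congruence is exhibited directly against the integral forms $(\Es)^a g$. Your final step, by contrast, needs more than the surjectivity of $M_{k'}(SL_2(\ZZ),\ZZ)\to M_{k'}(SL_2(\ZZ),\ZZ)\otimes\FF_p$ (which is trivial): you must know that your geometric section $h\in H^0(\CM_1\otimes\FF_p,\omega^{\otimes(k-a(p-1))})$ is the reduction of an integral level-$1$ form, i.e.\ the comparison between geometric mod $p$ modular forms of level $1$ and reductions of classical integral ones, valid for $p\ge 5$ (Katz). That fact is standard and citable, but it is exactly where the hypothesis $p\ge 5$ enters your argument, so it should be invoked explicitly rather than folded into the structure theorem for $M(SL_2(\ZZ),\ZZ)$.
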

\begin{proof}
The case where $a\leq 0$ is covered by Serre's argument in \cite{serre}. We deal here with the case where $a > 0$.The proof relies on Deligne and Rapoport's study of the  stack $\CM_{\GZ(p)}$ in \cite{DR}. The intersection theory on such stacks is summarized in Section \ref{intheory}.\\
The modular form $f$ can be seen as a global section $f \in H^0 (\CM_{\GZ(p)}, \omega^{\otimes k})$. Let $N_1$ and $N_2$ be the two irreducible components of $\CM_{\GZ(p)}\otimes \FF_p$ containing respectively the (reductions of the) cusps $\infty$ and $0$. Then $f$ does not vanish at the generic point of $N_1$, and it vanishes to order $a$ at the generic point of $N_2$. Thus the divisor of $f$ can be written as:
\[ div (f) = D + aN_2\]
where, without loss of generality after multiplying $f$ by a constant of $p$-adic valuation $0$, we can assume\footnote{While $f$ might have some poles along certain vertical (i.e.\ fibral) divisors, it cannot have a pole along a horizontal divisor. This is because any horizontal divisor would meet the generic fiber, and so if $f$ has a pole along a horizontal divisor, then $f$ would have a pole when considered as a modular form over $\CC$, which contradicts the holomorphy of $f$ as a complex function of a complex variable. The vertical components of the divisor of poles correspond to the primes appearing in the denominators of the $q$-expansion of $f$. As these denominators are bounded, one can find a constant $K \in \ZZ$, $K \not \equiv 0 \pmod{p}$ such that $Kf$ has no primes in the denominators of its $q$-expansion except possibly $p$. Multipying by such a constant obviously preserves the $p$-adic valuation of $f$ at both cusps}that $D$ is an effective horizontal Cartier divisor on $\CM_{\GZ(p)}$. As in \cite{DR}, VII.3.19, we can calculate the intersection number $(D,N_1)$ as follows: on $N_1 \cong \CM_1 \otimes \FF_p$, the degree of $\omega$ is $\frac{1}{24}$ (\cite{DR}, VI.4.4.1), hence:
\[ (div(f), N_1) = \frac{k}{24}. \]
The components $N_1$ and $N_2$ intersect transversally at the supersingular points. It follows from \cite{DR} (Th\'eor\`eme V.1.16 and Th\'eor\`eme VI.4.9.1) that:
\[ (N_2, N_1) = \frac{p-1}{24} \]
This gives then that:
\[ (D,N_1) = \frac{k - a(p-1)}{24}. \]
Now $D$ is an effective Cartier divisor, so it corresponds to an invertible sheaf $\CO(D)$ together with a regular global section $s_D$. We then have:
\[ (D,N_1) = \deg_{\FFc_p} (\CO(D)|_{N_1}) = \sum_{x} \frac{\deg_x(s_D)}{|Aut(x)|} \]
where the sum is over the closed geometric points of the component $N_1$. Since $s_D$ is regular,  $\deg_x(s_D) \geq 0$ for each $x$, and $(D,N_1) \geq 0$. Since $p \geq 5$, it follows (see \cite{silverman}, Theorem 10.1) that for each $x$, $|Aut(x)| \leq 6$. In particular we have that if $(D,N_1) > 0$, then $(D,N_1) \geq \frac{1}{6}$.\\
First, if $k-a(p-1) = 2$, then $(D,N_1) = \frac{1}{12} < \frac{1}{6}$, which is impossible. So we must have that either $k-a(p-1)=0$ or $k-a(p-1) > 2$. Denote by $M^{a}_k$ the subset of $M_k(\GZ(p),\ZZ)$ consisting of modular forms $h$ such that $v_p(h)=0$ and $v_p(\tilde{h}) = k + a$. Define the mapping:
\[ \phi: M_{k-a(p-1)}(SL_2(\ZZ),\ZZ) \rightarrow M_k(\GZ(p),\ZZ) \]
\[g \mapsto (\Es)^a g. \]
Recall here the convention that $M_0(SL_2(\ZZ),\ZZ) = \ZZ$. It is easy to check that the image under $\phi$ of $V = M_{k-a(p-1)}(SL_2(\ZZ),\ZZ)$ lies in $M^{a}_k$. Recall also that $V$ has a Victor Miller basis, which is the unique integral basis consisting of form $f_0,\cdots, f_{d-1}$, where $d = \dim_{\QQ} M_{k-a(p-1)}(SL_2(\ZZ),\QQ)$, such that $f_i = q^i + O(q^d)$ for $0 \leq i \leq d-1$ (see for instance Proposition 6.2 in \cite{kilford}). We also adopt the convention that the Victor Miller basis of $M_0(SL_2(\ZZ),\ZZ)$ is the set $\{1\}$.
\\Assume $f$ mod $p$ is not in $ \phi(V)\otimes \FF_p$, then substracting from $f$ a suitable linear combination of the images in $ \phi(V)\otimes \FF_p$ of elements of the Victor Miller basis of $V$, we may assume that $f$ has a mod $p$ vanishing order at infinity $v_{\infty,p} (f) \geq d$, where $d = \dim_{\QQ} M_{k-a(p-1)}(SL_2(\ZZ),\QQ)$; then so does the section $s_D$ of $\CO(D)$. As the cusp infinity has only an automorphism of order $2$, we have:
\[ \sum_{x \not = \infty} \frac{\deg_x(s_D)}{|Aut(x)|}  +  \frac{v_{\infty,p}(s_D)}{2} = (D,N_1). \]
If $k-a(p-1) = 0$, then $(D,N_1) = 0$, and $d =  \dim_{\QQ} M_{k-a(p-1)}(SL_2(\ZZ),\QQ) = 1$. As $\deg_x(s_D) \geq 0$ for each $x$, this means that $\deg_x(s_D) = 0$ for all $x$, so in particular, $v_{\infty,p}(s_D) = 0$, but this contradicts the inequality $v_{\infty,p} (f) \geq d$. So assume that $k - a(p-1) > 2$. We have:
\[ \sum_{x \not = \infty} \frac{\deg_x(s_D)}{|Aut(x)|}  +  \frac{v_{\infty,p}(s_D) - d}{2} = (D,N_1) - \frac{d}{2}. \]
Consider the form $f_{d-1} \in V$, which is the element of the Victor Miller basis of $V$ with highest vanishing order at infinity, this vanishing order at infinity being $v_{\infty}(f) = d-1$. This form vanishes nowhere other than at infinity and possibly at the elliptic points of orders 2 and 3. Let $v_2(f)$ and $v_3(f)$ denote respectively the vanishing orders of $f$ at the elliptic points of orders 2 and 3, and recall that $v_2(f) \leq 1$ and $v_3(f) \leq 2$. The valence formula for level 1 modular forms (see for example Proposition 3.2 in \cite{kilford}) then gives:
\[ v_{\infty}(f_{d-1}) + \frac{1}{2}v_2(f_{d-1}) + \frac{1}{3} v_3(f_{d-1}) = \frac{k-a(p-1)}{12}, \] and therefore:
\[ \frac{k-a(p-1)}{12} - (d - 1) \leq \frac{1}{2} + \frac{2}{3} = \frac{7}{6}.\] 
Thus using the above calculation for $(D,N_1)$, we find that:
\[ \sum_{x \not = \infty} \frac{\deg_x(s_D)}{|Aut(x)|}  +  \frac{v_{\infty,p}(s_D) - d}{2} \leq \frac{1}{12},\] which forces:
\[ \sum_{x \not = \infty} \frac{\deg_x(s_D)}{|Aut(x)|}  +  \frac{v_{\infty,p}(s_D) - d}{2} = 0 \] and hence:
\[ (D,N_1) = \frac{d}{2} \]
which in turn gives:
\[ d = \frac{k-a(p-1)}{12}. \]
This however is contradicted by the dimension formula for modular forms in level 1, which says that if $k-a(p-1) \equiv 0 \pmod{12}$, then $d = 1 + \frac{k-a(p-1)}{12}$.

\end{proof}

\begin{rem}
A weaker version of Theorem \ref{congthm} can be proven by elementary methods. This can be done by adapting the argument due to Kilbourn (see \cite{kilbourn}) by which he proves Lemma \ref{killema}. This is a sketch of the argument: if $f$ satisfies the hypothesis of Theorem \ref{congthm} with $a \geq 1$, then $h = tr(f) \equiv f \pmod{p^2}$. Let $g = \frac{f - h}{p^{v_p(f-h)}}$. let $-|V : M_k(SL_2(\ZZ)) \rightarrow M_k(\GZ(p),\ZZ)$ denote the operator defined by $V(\sum a_n q^n) = \sum a_n q^{pn}$. Then one can show that $h|V \equiv p^{m - k} \tilde{g} \pmod{p}$. On the other hand, $v_p (p^{m - k} \tilde{\tilde{g}}) = v_p(p^{m-k}p^k g)\geq a+1 \geq 2$, so  $v_p (p^{m - k} \tilde{g})$ is congruent to some modular form of level 1 in weight $kp - (a+1)(p-1)$. If $w_p$ denotes the mod $p$ filtration of level $1$ modular forms, defined by:
\[ w_p(F) = \inf\{k : F \equiv G \pmod{p} \mbox{ for some } G \in M_k(SL_2(\ZZ),\ZZ)\}, \]
we know that $w_p(h|V) = pw_p(h)$ so this forces $w_p(h) \leq k - (p-1)$. It might be possible to find a variation of this argument that would give an alternative and elementary proof of Theorem \ref{congthm}. The author was however unable to find such an argument. 
\end{rem}

A simple corollary of this theorem concerns the $T$-form defined above.
\begin{cor}\label{ptcor}
$p\tilde{T} \equiv 1 \pmod{p}$.
\end{cor}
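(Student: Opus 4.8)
The plan is to reduce the statement to the explicit $q$-expansion of $\tilde{T}$ already computed in the proof of Lemma \ref{vptlem}. There it was shown that
\[ \tilde{T}(z) = \epsilon\, p^{-1}\left(\frac{\eta(z)^p}{\eta(pz)}\right)^2, \qquad \epsilon = \pm 1, \]
so that $p\tilde{T} = \epsilon\left(\eta(z)^p/\eta(pz)\right)^2$ has a $q$-expansion with integer coefficients. It therefore suffices to reduce this closed form modulo $p$ and to determine the scalar $\epsilon$. First I would establish the congruence $\left(\eta(z)^p/\eta(pz)\right)^2\equiv 1\pmod{p}$ by the ``freshman's dream.'' Writing $\eta(z)^p = q^{p/24}\prod_{n\geq 1}(1-q^n)^p$ and $\eta(pz) = q^{p/24}\prod_{n\geq 1}(1-q^{pn})$, the prefactor $q^{p/24}$ cancels in the quotient, and since $(1-q^n)^p\equiv 1-q^{pn}\pmod{p}$ for every $n$, we get $\prod_n (1-q^n)^p\equiv \prod_n(1-q^{pn})\pmod{p}$. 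Hence $\eta(z)^p/\eta(pz)\equiv 1\pmod{p}$ as an element of $\ZZ_{(p)}[[q]]$, and the same holds after squaring. Consequently $p\tilde{T}\equiv \epsilon\pmod{p}$, a constant.

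It then remains only to check that $\epsilon = 1$, and this is the main obstacle. The value of $\epsilon$ must be read off from the explicit eta-multiplier computation recorded in Lemma \ref{vptlem}, specialized to $N = p$; the delicate point is that there only the statement $\epsilon = \pm 1$ was needed (to conclude $v_p(\tilde{T}) = -1$), whereas here one must keep careful track of the root-of-unity multipliers $\nu_\eta(M_p')$ and $\nu_\eta(M_p)$ in order to pin down the sign exactly. Once $\epsilon = 1$ is confirmed, the congruence $p\tilde{T}\equiv 1\pmod{p}$ follows immediately from the previous paragraph.

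Finally, I would record that this is precisely what Theorem \ref{congthm} predicts, which serves as a conceptual cross-check and justifies the framing as a corollary of the theorem. Indeed, set $f := p\tilde{T}\in M_{p-1}(\GZ(p),\ZZ)$. Then $v_p(f) = 1 + v_p(\tilde{T}) = 0$, while $\tilde{f} = p\,\tilde{\omega}(\tilde{T}) = p\cdot p^{\,p-1}T = p^{p}T$, using that $\tilde{\omega}^2$ acts as multiplication by $p^{k}$ on weight-$k$ forms (because $(W_p^N)^2$ differs from an element of $\GZ(p)$ by the scalar $p$, and the scalar acts trivially under $-|_k$). Hence $v_p(\tilde{f}) = p = k + a$ with $a = 1 > 0$, and Theorem \ref{congthm} produces $g\in M_{k - a(p-1)}(SL_2(\ZZ),\ZZ) = M_0(SL_2(\ZZ),\ZZ) = \ZZ$ with $p\tilde{T}\equiv g\pmod{p}$; that is, the reduction is forced to be a constant, and comparing constant terms recovers $g\equiv 1$. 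This route confirms independently that the reduction is constant, leaving the sign determination as the only genuine computation.
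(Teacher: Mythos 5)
Your second paragraph is not a ``cross-check'' --- it \emph{is} the paper's proof. The paper proves the corollary exactly by applying Theorem \ref{congthm} to $f = p\tilde{T}$, using $v_p(p\tilde{T}) = 0$, $\widetilde{p\tilde{T}} = p^p T$ (so $v_p(\tilde{f}) = p = k+a$ with $a=1$), concluding that $p\tilde{T}$ is congruent mod $p$ to a form of weight $0$, i.e.\ a constant, and then reading the constant off the first $q$-expansion coefficient. Your verification of the hypotheses (including $\tilde{\omega}^2 = p^k$ on weight-$k$ forms) is correct and matches the paper.

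The genuine gap is that you never determine the constant, and no part of your write-up can. Both of your routes reduce the corollary to the claim $\epsilon = 1$, which you explicitly defer (``the main obstacle'', ``the only genuine computation''); and the sentence ``comparing constant terms recovers $g \equiv 1$'' is circular, because the constant term of $p\tilde{T}$ is exactly $\epsilon$ --- the quantity left open. Worse, the deferred step cannot be completed as you planned: specializing the paper's own formula for $\epsilon$ (proof of Lemma \ref{vptlem}, odd case) to $N = p$ gives $\epsilon = e^{2\pi i (p-1)(p-2)/12}$, which is $-1$ whenever $p \equiv 3 \pmod{4}$ (e.g.\ $p=7$ gives $e^{5\pi i} = -1$); an independent elementary check, applying $\eta(-1/z) = \sqrt{-iz}\,\eta(z)$ twice to the Fricke matrix $\begin{pmatrix} 0 & -1 \\ p & 0 \end{pmatrix}$, gives the same answer, namely $p\tilde{T} = (-i)^{p-1}\left(\frac{\eta(z)^p}{\eta(pz)}\right)^2 = (-1)^{(p-1)/2}\left(\frac{\eta(z)^p}{\eta(pz)}\right)^2$. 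So for $p \equiv 3 \pmod{4}$ the constant is $-1$, and the confirmation ``$\epsilon = 1$'' you hope for is impossible; your instinct that the sign is delicate was well-founded, and in fact it points at an issue in the statement itself: under the paper's stated conventions the conclusion should be $p\tilde{T} \equiv (-1)^{(p-1)/2} \pmod{p}$, and the paper's one-line evaluation of the constant glosses over the same sign. (This is harmless downstream: in the proof of Theorem \ref{gensZZ} the sign can be absorbed by replacing the level-$1$ form $h$ with $(-1)^a h$.) As it stands, though, your proposal is an incomplete proof of a statement that, in the form given, fails for half of all primes.
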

\begin{proof}
This can be proven directly from the computation of $\tilde{T}$ as in the previous section, but this follows easily from Theorem \ref{congthm}, by noting that $v_p(p\tilde{T}) = 0$, $v_p(\widetilde{p\tilde{T}}) = p$ and that $p\tilde{T}$ is of weight $p-1$. The theorem then implies that $p\tilde{T}$ is congruent mod $p$ to a modular form of weight $0$, that is, a constant, and this constant is found to be $1$ by examining the first coefficient of the $q$-expansion.
\end{proof}

\subsection{Generators of $M(\GZ(p), \ZZ)$}
In this section we identify a set of generators for $M(\GZ(p),\ZZ)$. Let $T$ denote the $T$-form defined earlier:
\[ T(z) := \left(\frac{\eta(pz)^p}{\eta(z)}\right)^2 \in M_{p-1}(\GZ(p),\ZZ).\]
Let $S$ denote the subset of $M(\GZ(p), \ZZ)$ consisting of modular forms $f$ satisfying $v_p(\tilde{f})\geq 0$. We prove the following:
\begin{thm}\label{gensZZ}
$M(\GZ(p), \ZZ)$ is generated by $T$ and $S$. 
\end{thm}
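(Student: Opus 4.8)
The plan is to first recast the statement: since $\widetilde{fg}=\tilde f\tilde g$ and $\widetilde{f+g}=\tilde f+\tilde g$, and $v_p$ is multiplicative (resp. superadditive) for these, the set $S=\{f:v_p(\tilde f)\ge 0\}$ is already a graded $\ZZ$-subalgebra of $M(\GZ(p),\ZZ)$; it contains $\ZZ$ and in fact all of $M(SL_2(\ZZ),\ZZ)$, since a level $1$ integral form $g$ of weight $w$ has $\tilde g=p^{w}g(q^p)$ up to a unit, so $v_p(\tilde g)=w\ge 0$. Hence ``generated by $T$ and $S$'' means precisely $M(\GZ(p),\ZZ)=S[T]$, and it suffices to show each homogeneous $f\in M_k(\GZ(p),\ZZ)$ lies in $S[T]$. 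Factoring out $p^{v_p(f)}\in\ZZ\subseteq S$, I may assume $v_p(f)=0$.

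The core is an induction on $m:=-v_p(\tilde f)$. If $m\le 0$ then $f\in S$ and there is nothing to prove; this already covers all $k\le p-3$ by Corollary \ref{kilcor}. Suppose $m>0$. Recall $v_p(\tilde T)=-1$ and, by Corollary \ref{ptcor}, $p\tilde T\equiv 1\pmod p$, whence $p^m\tilde T^{m}\equiv 1\pmod p$. For any $s\in S$ homogeneous of weight $k-m(p-1)$, the product $sT^{m}$ has weight $k$ and $v_p(\widetilde{sT^m})=v_p(\tilde s)-m\ge -m$, with leading part governed by $p^m\widetilde{sT^m}=\tilde s\,(p^m\tilde T^m)\equiv \tilde s\pmod p$. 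Therefore $v_p(\widetilde{f-sT^m})>-m$ holds \emph{exactly} when $\tilde s\equiv p^m\tilde f\pmod p$. Thus the inductive step reduces to a single existence problem: find $s\in S$ of weight $k-m(p-1)$ with $\tilde s\equiv p^m\tilde f\pmod p$. Granting such an $s$, the form $f-sT^m$ has strictly larger $v_p(\tilde{\;\cdot\;})$, so it lies in $S[T]$ by the inductive hypothesis, and hence so does $f$.

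The construction of $s$ is where Theorem \ref{congthm} enters, and it is the main obstacle. Set $\psi:=p^m\tilde f$; it is integral with $v_p(\psi)=0$ and $v_p(\tilde\psi)=m$ (using $\tilde{\tilde f}=\text{unit}\cdot f$), so $\psi\in S$ and $\psi\in M_k^{\,m-k}$ in the notation of the proof of Theorem \ref{congthm}. Applying that theorem to $\psi$, whose cusp-$0$ valuation is $v_p(\tilde\psi)=k+(m-k)$, yields a level $1$ form congruent to $\psi$ of weight $k-(m-k)(p-1)=\bigl(k-m(p-1)\bigr)+k(p-1)$. The remaining work is to transport this congruence through the Atkin--Lehner involution $W_p$ (the symmetry exchanging the two fibral components $N_1,N_2$ of $\CM_{\GZ(p)}\otimes\FF_p$), using that $p\tilde T\equiv 1$ plays at the cusp $0$ the role that $\Es\equiv 1$ plays at the cusp $\infty$, so as to realize $\tilde s\equiv p^m\tilde f\pmod p$ by a genuinely integral form $s$ of the target weight $k-m(p-1)$. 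The hard point is precisely this weight bookkeeping: Theorem \ref{congthm} a priori lands $k(p-1)$ above the target, a multiple of $p-1$, and one must prove the relevant mod $p$ filtration is low enough to descend to weight $k-m(p-1)$; this is exactly where the strengthening allowing $a>0$ (weights below $k$) is indispensable, and the needed filtration bound comes from the same valence-formula and intersection-theoretic analysis on $N_1$ used to prove Theorem \ref{congthm}. Nonnegativity of $k-m(p-1)$, ensuring $s$ can exist, follows from the bounds on $v_p(\tilde f)$ in Proposition 3.20 of \cite{DR}; the induction on $m$ then terminates and gives $f\in S[T]$.
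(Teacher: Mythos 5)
Your overall skeleton is sound and is in fact the same as the paper's: reduce to finding, for $f$ with $v_p(f)=0$ and $m=-v_p(\tilde f)>0$, a form $s\in S$ of weight $k-m(p-1)$ with $\tilde s\equiv p^m\tilde f\pmod p$, subtract $sT^m$, and induct on $m$ using Corollary \ref{ptcor}. The problem is that the step you yourself flag as the ``main obstacle'' --- the construction of $s$ --- rests on a concrete miscalculation and is then left unproved, so the proposal has a genuine gap at its core. You assert $\tilde{\tilde f}=\mbox{unit}\cdot f$. This is false: the Atkin--Lehner involution squares to the identity on forms ($f|W_p^N|W_p^N=f$ with the normalized slash), but the operator $\tilde\omega$ carries the extra factor $p^{k/2}$, so $\tilde{\tilde f}=p^k\,(f|W_p^N|W_p^N)=p^kf$; this is exactly what the paper uses when it writes $v_p(\tilde{\tilde f})=v_p(p^kf)$. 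Consequently, for $\psi=p^m\tilde f$ one has $v_p(\tilde\psi)=m+k+v_p(f)=k+m$, not $m$; that is, $\psi\in M_k^{\,m}$ in the notation of Theorem \ref{congthm}, not $M_k^{\,m-k}$. Applying that theorem with $a=m$ therefore lands exactly in the target weight: there exists $h\in M_{k-m(p-1)}(SL_2(\ZZ),\ZZ)$ with $h\equiv\psi\pmod p$. The ``hard point'' you describe --- descending $k(p-1)$ steps of mod $p$ filtration via a valence-formula argument --- is an artifact of the wrong exponent; it does not exist, and since you never actually carry out that descent, the argument as written is incomplete precisely where it needed to be complete.

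Once the computation is corrected, the step you leave vague (``transport this congruence through the Atkin--Lehner involution'') has a one-line answer, and it is what the paper does: take $s:=h|V$, i.e.\ $h(pz)$. For a level-$1$ form $h$ of weight $w=k-m(p-1)$ one has $\tilde h=p^w\,h|V$, hence $\widetilde{h|V}=p^{-w}\,\tilde{\tilde h}=p^{-w}\cdot p^w h=h$; so $v_p(\widetilde{h|V})=v_p(h)\ge 0$, giving $h|V\in S$, and $\widetilde{h|V}=h\equiv p^m\tilde f\pmod p$ is precisely the congruence your inductive step requires. With this insertion your induction goes through and coincides with the paper's proof, which produces the decomposition $f=T^a h|V+v$ with $v_p(\tilde v)\ge 1-a$ and iterates, ending with $f=T^af_a|V+\cdots+Tf_1|V+f_0$ with each $f_i|V$ and $f_0$ in $S$. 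Your preliminary observations (that $S$ is a subalgebra, that it contains the level-$1$ integral forms, and that nonnegativity of $k-m(p-1)$ follows from the valuation bounds in \cite{DR}) are correct and unobjectionable.
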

\begin{proof}
Let $f \in M_k(\GZ(p), \ZZ)$, $f \not \in S$. Put $a = -v_p(\tilde{f}) > 0$. We argue by induction on $a$. Let $g = p^a \tilde{f}$. Then $v_p(g)=0$ and $v_p(\tilde{g}) = a + v_p(\tilde{\tilde{f}}) = a + v_p(p^k f) \geq k + a$, so by Theorem \ref{congthm}, there exists $h \in M_{k-a(p-1)}(SL_2(\ZZ),\ZZ)$ such that:
\[ h \equiv g \pmod{p},\]
and by Corollary \ref{ptcor}, this can be rewritten as:
\[ (p\tilde{T})^a h\equiv g \pmod{p} \]
where now $(p\tilde{T})^a h$ and $g$ have the same weight. Thus there exists $u \in M_k(\GZ(p),\ZZ)$ such that:
\[ (p\tilde{T})^a h + pu = g.\]
Recall that $\widetilde{p\tilde{T}} = p^p T$, and that $\tilde{h} = p^{k-a(p-1)} h|V$ since $h$ is of level 1 and of weight $k-a(p-1)$. So applying the $\tilde{w}$ operator again, we get:
\[ p^{k+a} T^a h|V + p\tilde{u} = p^{k+a} f \]
and hence $p\tilde{u} = p^{k+a} v$ for some $v \in M_k(\GZ(p),\ZZ)$. Now we have:
\[ f = T^a h|V + v. \]
An easy calculation now shows that $v_p(\tilde{v}) \geq 1-a = v_p(f)$ (since $v_p(u) \geq 0$). By induction it then follows that we have the following decomposition of $f$:
\[ f = T^a f_a|V + T^{a-1} f_{a-1}|V + \cdots + T f_1|V + f_0 \]
where for each $1 \leq i \leq a$, $f_i \in M_{k - i(p-1)}(SL_2(\ZZ),\ZZ)$, and hence $v_p(\tilde{f}_i|V) = v_p(f_i) \geq 0$, and $v_p(\tilde{f_0})\geq 0$, which proves the theorem.
\end{proof}

Numerical evidence points to the following conjecture:
\begin{conj}\label{G0conj1}
The $\ZZ$-subalgebra of $M(\GZ(p),\ZZ)$ generated by $S = \{f \in M(\GZ(p),\ZZ) : v_p(\tilde{f}) \geq 0\}$ is generated in weight at most $6$. 
\end{conj}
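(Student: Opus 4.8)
The plan is to reduce the conjecture to a purely $p$-integral statement, and then to prove that statement by reducing modulo $p$ and analysing the graded ring of sections of $\omega$ on the reducible special fibre of $\CM_{\GZ(p)}$.

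First I would record that the object in the statement is the honest graded algebra $\bigoplus_k S_k$, where $S_k = S \cap M_k(\GZ(p),\ZZ)$: since $\widetilde{fg} = \tilde f\,\tilde g$ we have $v_p(\widetilde{fg}) = v_p(\tilde f) + v_p(\tilde g) \geq 0$ whenever $f,g \in S$, so $\bigoplus_k S_k$ is already closed under multiplication and equals the subalgebra generated by $S$. Moreover each $S_k$ is a full-rank lattice in $M_k(\GZ(p),\QQ)$, because multiplying by $p$ raises both $v_p(f)$ and $v_p(\tilde f)$ by one; hence $\left(\bigoplus_k S_k\right)\otimes \QQ = M(\GZ(p),\QQ)$, which is generated in weight at most $6$. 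Thus there is no content over $\QQ$: the entire difficulty is to show that products of weight-$\leq 6$ elements of $S$ already generate $S_k$ over $\ZZ_{(p)}$ for every $k \geq 7$, i.e. that $\bigoplus_k S_k$ fails to be $p$-saturated in no weight beyond $6$.

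Next I would pass to the special fibre. Reducing mod $p$, the elements of $S$ have $p$-integral $q$-expansions at both cusps, so they reduce to genuine sections of $\omega^{\otimes k}$ on $\CM_{\GZ(p)}\otimes\FF_p = N_1 \cup N_2$, the two copies of $\CM_1 \otimes \FF_p$ glued transversally at the supersingular points. Concretely I expect $S_k \otimes \FF_p$ to be identified with the space of pairs $(g_1,g_2)$ of level-$1$ weight-$k$ mod $p$ modular forms whose values agree, in $\omega^{\otimes k}$, at every supersingular point. The graded ring attached to a single component is $\bigoplus_k M_k(SL_2(\ZZ),\FF_p)$, generated by $\bar E_4$ and $\bar E_6$ in weight at most $6$: here $\Delta$, the one level-$1$ generator of weight $>6$ over $\ZZ$, becomes redundant mod $p$, since $1728$ is a unit and $\Delta = (E_4^3 - E_6^2)/1728$. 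Having established generation of the mod $p$ ring in weight at most $6$, I would then lift to $\ZZ_{(p)}$ by the graded Nakayama/flatness argument already used in the proof of Theorem \ref{relthmZZ}, which turns a mod $p$ generation statement into one over $\ZZ_{(p)}$; combined with generation over $\QQ$, which controls the primes $\ell \neq p$, this yields the statement over $\ZZ$.

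The heart of the matter, and the step I expect to be the main obstacle, is proving that the ring of matching pairs on $N_1 \cup N_2$ is generated in weight at most $6$. Each factor is fine, but the graded ring is a fibre product over the gluing data at the supersingular points, and a fibre product of two rings generated in low weight need not itself be so generated: one must show that in every weight $\geq 7$ the products of weight-$\leq 6$ sections already surject onto the supersingular matching conditions. This requires a projective-normality statement for the reducible nodal (stacky) fibre — a version of Lemma \ref{mumlema} valid on $N_1 \cup N_2$ rather than on a smooth curve — in which the relevant numerics are the stacky degree $\deg\omega = \tfrac{1}{24}$ on each component and the intersection number $(N_1,N_2) = \tfrac{p-1}{24}$, together with the automorphism contributions at the elliptic and supersingular points. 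The delicate point is uniformity in $p$: the number of supersingular points, hence of gluing conditions, grows like $p/12$, which is exactly the mechanism that drives the generating weight of the \emph{full} algebra up to $p-1$ in Theorem \ref{genZ}. One must therefore show that, after restricting to the subalgebra $S$, these conditions can always be resolved using products of forms of weight at most $6$ (invoking the sharpened form of Lemma \ref{mumlema} noted in Remark \ref{remweights}). Making this nodal Mumford-type argument precise, and extracting the uniform bound $6$ from it, is where the real work lies.
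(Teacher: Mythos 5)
The statement you set out to prove is Conjecture \ref{G0conj1}: the paper does not prove it, and offers only the computational evidence of Section \ref{conj2data} (levels $p \leq 31$), so there is no proof in the paper to compare yours against, and your proposal has to stand on its own. It does not: by your own admission the decisive step --- a Mumford-type generation statement for the graded ring of sections on the reducible fibre $N_1 \cup N_2$, with a bound \emph{uniform in} $p$ --- is left entirely open, and that step is exactly where the content of the conjecture lies (as you yourself note, the number of supersingular gluing points grows linearly in $p$, the same mechanism that forces a weight-$(p-1)$ generator in Theorem \ref{genZ}). Your preliminary observations are correct ($\bigoplus_k S_k$ is closed under multiplication since $\widetilde{fg} = \tilde{f}\tilde{g}$, and each $S_k$ is a full-rank lattice in $M_k(\GZ(p),\QQ)$), but the reduction you build on top of them contains concrete errors, so even granting a ``nodal Mumford lemma'' the argument would not assemble into a proof.

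First, elements of $S$ do not in general reduce to sections of $\omega^{\otimes k}$ on $\CM_{\GZ(p)}\otimes\FF_p$. Regularity along $N_2$ is governed not by $v_p(\tilde{f}) \geq 0$ but by $v_p(\tilde{f}) \geq k$: the geometrically normalized $q$-expansion at the cusp $0$ is $p^{-k}\tilde{f}$, as reflected in the paper's own divisor identity $div(f) = D + aN_2$ with $a = v_p(\tilde{f}) - k$ in the proof of Theorem \ref{congthm}. For instance, $f = \eta(z)^2\eta(11z)^2 \in M_2(\GZ(11),\ZZ)$ satisfies $\tilde{f} = \pm 11 f$, so $v_{11}(\tilde{f}) = 1 \geq 0$ and $f \in S$, yet $f$ has a pole along $N_2$; equivalently, a section of $\omega^{\otimes 2}$ on the special fibre must restrict on $N_1$ to an element of $H^0(\CM_1\otimes\FF_{11},\omega^{\otimes 2}) = M_2(SL_2(\ZZ),\ZZ)\otimes\FF_{11} = 0$, while the mod-$11$ $q$-expansion of $f$ at $\infty$ is nonzero. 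So the lattice $S_k$ is strictly larger than the lattice of integral sections, and your mod-$p$ dictionary applies to the wrong module. Second, the special fibre is not the symmetric fibre product you describe: under the identification $N_2 \cong \CM_1\otimes\FF_p$ (via $(E,C)\mapsto E/C$), the sheaf $\omega|_{N_2}$ is the Frobenius twist of $\omega$, hence isomorphic to $\omega^{\otimes p}$, so sections of $\omega^{\otimes k}$ restrict on $N_2$ to level-one forms of weight $pk$, not $k$ (check of degrees: $\frac{1}{24} + \frac{p}{24} = \frac{p+1}{24}$, the generic degree of $\omega$). Your claim that weight-$\leq 6$ elements of $S$ reduce to ``diagonal pairs'' and that the fibre ring is generated by $(\bar{E}_4,\bar{E}_4)$ and $(\bar{E}_6,\bar{E}_6)$ is therefore not meaningful as stated; the asymmetry of the two components is essential (for example $\Es$ reduces to the Hasse invariant on $N_1$ and to $0$ on $N_2$). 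Third, even your ``no content over $\QQ$'' step is unavailable from the paper: Corollary \ref{gencorQ}(2) requires congruence conditions designed to remove elliptic points, which a prime level satisfies only when $p \equiv 11 \pmod{12}$; for general $p$ the weight-$6$ bound over $\QQ$ must be imported from outside the paper. None of these repairs is cosmetic: correcting the first two changes which graded ring you must show is generated in weight at most $6$, and that corrected statement is, once again, precisely the open content of the conjecture.
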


Conjecture \ref{G0conj1} and Theorem \ref{gensZZ} together imply the following:
\begin{conj}\label{G0conj2}
The weights of the modular forms appearing in a minimal set of generators for $M(\GZ(p),\ZZ)$ are in the set $\{2,4,6,p-1\}$, and there is only one generator of weight $p-1$ (which can be chosen to be the $T$-form $T$).
\end{conj}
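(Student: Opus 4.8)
The plan is to deduce Conjecture \ref{G0conj2} from Conjecture \ref{G0conj1} by analysing the module of indecomposables of the graded $\ZZ$-algebra $M = M(\GZ(p),\ZZ)$. Write $M_+ = \bigoplus_{k \geq 2} M_k$ for the augmentation ideal, and for each $k$ set
\[ Q_k := M_k / (M_+ \cdot M_+)_k. \]
By the graded form of Nakayama's lemma (which holds over $\ZZ$ because the grading replaces the completeness hypothesis by induction on degree), the weights appearing in a minimal set of generators are exactly those $k$ with $Q_k \neq 0$, and the minimal number of generators in weight $k$ equals the minimal number of $\ZZ$-module generators of $Q_k$. Since $-I \in \GZ(p)$, we have $M_k = 0$ for odd $k$, so only even weights can occur. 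The task therefore reduces to showing that $Q_k = 0$ for every even $k \notin \{2,4,6,p-1\}$ and that $Q_{p-1}$ is cyclic, generated by the class of $T$.

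First I would use the generation statement. By Theorem \ref{gensZZ}, $M$ is generated as a $\ZZ$-algebra by $T$ (of weight $p-1$) together with the set $S$, and, granting Conjecture \ref{G0conj1}, the subalgebra $R = \langle S\rangle$ is generated in weight at most $6$. Hence $M_k$ is spanned by monomials $T^{\,j} m$ with $m \in R$ a monomial in the elements of $S$ and $j(p-1) + \deg m = k$. Running through the even weights $k > 6$ with $k \neq p-1$: if $j = 0$ then $m \in R_k$, which is decomposable because $R$ is generated in weight $\leq 6 < k$; if $j \geq 1$ and $\deg m > 0$ then $T^{\,j}$ and $m$ are two positive-weight factors; and if $j \geq 1$ with $\deg m = 0$ then $k = j(p-1)$ with $j \geq 2$, so $T^{\,j} = T\cdot T^{\,j-1}$ is decomposable. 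In every case the monomial lies in $(M_+\cdot M_+)_k$, so $Q_k = 0$ and no generator of weight $k$ occurs.

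It remains to treat $k = p-1$; assume first $p \geq 11$, so that $p - 1 > 6$. Grouping the spanning monomials by the exponent $j$ of $T$, only $j = 0$ and $j = 1$ contribute (since $2(p-1) > p-1$), and a $j = 1$ monomial is forced to be $T$ times a weight-$0$ factor, i.e.\ an integer multiple of $T$. Thus
\[ M_{p-1} = R_{p-1} + \ZZ\, T \]
is an honest equality of $\ZZ$-modules. Because $p - 1 > 6$, the summand $R_{p-1}$ consists of products of at least two generators of $R$, hence lies in $(M_+\cdot M_+)_{p-1}$, so $Q_{p-1}$ is generated by the class of $T$ and is therefore cyclic; and it is nonzero because Lemma \ref{vptlem} shows $T$ is not a $\ZZ$-polynomial in forms of weight $< p-1$, i.e.\ $T \notin (M_+\cdot M_+)_{p-1}$. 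Thus exactly one generator of weight $p-1$ is needed, it may be taken to be $T$, and Theorem \ref{genZ} confirms this weight genuinely appears; the remaining generators, of weights in $\{2,4,6\}$, come from $R$. For the small primes $p = 5$ and $p = 7$ one has $p-1 \in \{4,6\}$, so the weight-$(p-1)$ generator is absorbed into the weights $\leq 6$ and the uniqueness clause must be read accordingly.

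The main obstacle is Conjecture \ref{G0conj1} itself, which supplies the crucial weight-$6$ bound for $R$ and is precisely the step that remains unproven, so the whole argument is conditional on it. The only genuinely delicate point in the reduction is that we work over $\ZZ$ rather than a field: one must verify that $M_{p-1} = R_{p-1} + \ZZ T$ holds as an equality of integral modules, not merely up to finite index or torsion, so that $Q_{p-1}$ is cyclic on $T$ exactly. This follows from the monomial bookkeeping above, since no monomial other than $T$ itself and elements of $R_{p-1}$ can land in weight $p-1$.
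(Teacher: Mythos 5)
Your proposal is correct and follows essentially the paper's own route: the statement is a conjecture which the paper does not prove but merely asserts to follow from Conjecture \ref{G0conj1} together with Theorem \ref{gensZZ}, and your indecomposables/graded-Nakayama argument is a sound, honest elaboration of exactly that implication, correctly flagged as conditional on Conjecture \ref{G0conj1}. Your caveat for $p=5,7$ (where $p-1\in\{2,4,6\}$ already) is also consistent with the computational tables in Section \ref{conj2data}, which show more than one generator in weight $p-1$ for those small primes.
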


In Section \ref{conj2data}, we present the computational data supporting Conjecture \ref{G0conj2}. 

\section{Computational data}

\subsection{Generators and relations of $M(\GI(N), \ZZ[\frac{1}{N}])$}
We use a modification of Algorithm 1 in \cite{rustom} to calculate the structure of the algebra of $M(\GI(N), \ZZ[\frac{1}{N}])$ for $5 \leq N \leq 22$. For each $N$, the minimal set of generators picked is the union of integral bases of $M_2(\GI(N),\ZZ)$ and $M_3(\GI(N),\ZZ)$. The algorithm operates as follows:
\begin{algo}\label{algo1}\indent
\begin{enumerate}
\item $GENERATORS = \{g_1,\cdots, g_r\}$ integral basis for $M_2(\GI(N),\ZZ) \bigcup$  integral basis for $M_3(\GI(N),\ZZ) $.
\item $RELATIONS = \{ \}$.
\item for each $k \in \{4,5,6\}$:
    \begin{enumerate} \label{thecheck}
    \item $B$ = integral basis of $M_k(\Gamma, A)$.
    \item $M = \begin{pmatrix} m_1 \\ \vdots \\ m_s \end{pmatrix}$, the monomials of weight $k$ in the elements of $GENERATORS$.
    \item Express elements of $M$ as an integral linear combination of elements of $B$, obtaining an integral matrix $A$ such that $M = AB$.
    \item Calculate $D$, the Smith Normal Form of the matrix $A$, as well as the transformation matrices $U$ and $V$, such that $D = UAV$.
    \item For every diagonal entry $D_{ii}$ of $D$, check if $D_{ii}$ is invertible in $\ZZ[\frac{1}{N}]$:
    \begin{itemize}
        \item if $D_{ii}$ is not invertible in $\ZZ[\frac{1}{N}]$, then the $i$th row of $UAB$ is a relation. Add it to $RELATIONS$. 
    \end{itemize}
    \item For every row $D_j$ of $D$:
    \begin{itemize}
        \item if $D_j$ is a zero row, then the $j$th row of $UAB$ is a relation. Add it to $RELATIONS$.
    \end{itemize}
    \end{enumerate}
\item Represent each generator $g_i$ as a variable $x_i$. Find the ideal $I$ of $\ZZ[\frac{1}{N}][x_1,\cdots,x_r]$ generated by $RELATIONS$. This is the ideal of relations. Output a Gr\"{o}bner basis for $I$.
\end{enumerate}
\end{algo}
In the following table we provide the number of relations needed to generate the ideal of relations of $M(\GI(N), \ZZ[\frac{1}{N}])$, detailing the total number of generators of $M(\GI(N),\ZZ[\frac{1}{N}]$ and the number of generators in weights 2 and 3, as well as the total number of relations and the number of relations in each degree.
\[\begin{tabular}{ c | c | c |c | c | c | c | c }
  \hline
  N & generators & weight 2& weight 3 & relations & degree 4  & degree 5 & degree 6 \\
  \hline       
5 & 7 & 3 & 4 & 17 & 1 & 6 & 10 \\ 
6 & 7 & 3 & 4 & 17 & 1 & 6 & 10 \\ 
7 & 12 & 5 & 7 & 58 & 6 & 24 & 28 \\ 
8 & 12 & 5 & 7 & 58 & 6 & 24 & 28 \\ 
9 & 17 & 7 & 10 & 124 & 15 & 54 & 55 \\ 
10 & 17 & 7 & 10 & 124 & 15 & 54 & 55 \\ 
11 & 25 & 10 & 15 & 281 & 35 & 125 & 121 \\ 
12 & 22 & 9 & 13 & 215 & 28 & 96 & 91 \\ 
13 & 33 & 13 & 20 & 502 & 64 & 226 & 212 \\ 
14 & 30 & 12 & 18 & 412 & 54 & 186 & 172 \\ 
15 & 40 & 16 & 24 & 749 & 104 & 344 & 301 \\ 
16 & 38 & 15 & 23 & 673 & 89 & 306 & 278 \\ 
17 & 52 & 20 & 32 & 1281 & 166 & 584 & 531 \\ 
18 & 43 & 17 & 26 & 869 & 118 & 398 & 353 \\ 
19 & 63 & 24 & 39 & 1902 & 246 & 867 & 789 \\ 
20 & 56 & 22 & 34 & 1495 & 207 & 690 & 598 \\ 
21 & 72 & 28 & 44 & 2497 & 346 & 1156 & 995 \\ 
22 & 65 & 25 & 40 & 2027 & 270 & 930 & 827 \\

  \hline  
\end{tabular}\]

\subsection{Generators of $M(\GZ(p),\ZZ)$}\label{conj2data}
Recall that the set $S$ is defined as the set of modular forms $f \in M(\GZ(p),\ZZ)$ such that $v_p(\tilde{f})\geq 0$. Recall also the $T$-form $T(z) := \left(\frac{\eta(pz)^p}{\eta(z)}\right)^2$. We will provide computational evidence for Conjecture \ref{G0conj2}.\\
We use Algorithm 1 in \cite{rustom} to calculate the degrees of generators in a minimal set of generators for $M(\GZ(p),\ZZ)$. The following table details the results.

\[\begin{tabular}{ l | c | r }
  \hline
  $p$ & weights of generators\\
  \hline       
  5 &  2, 4, 4 & --\\
  7 & 2, 4, 4, 6, 6 & --\\
  11 & 2, 2, 4, 6, 10 & --\\
  13 & 2, 4, 4, 4, 4, 6, 6, 12 & --\\
  17 & 2, 2, 4, 4, 4, 6, 16 & --\\
  19 & 2, 2, 4, 4, 4, 6, 6, 18 & --\\
  23 & 2, 2, 2, 4, 4, 6, 22 & --\\
  29 & 2, 2, 2, 4, 4, 4, 4, 6, 28 & --\\
  31 & 2, 2, 2, 4, 4, 4, 4, 6, 6, 30 & --\\

  \hline  
\end{tabular}\]
In each level $p$ above, the form in weight $p-1$ can be chosen to be the $T$-form. 

\bibliographystyle{amsalpha}
\bibliography{bibliorelZZ}

\def\cprime{$'$}
\providecommand{\bysame}{\leavevmode\hbox to3em{\hrulefill}\thinspace}
\providecommand{\MR}{\relax\ifhmode\unskip\space\fi MR }
\providecommand{\MRhref}[2]{%
  \href{http://www.ams.org/mathscinet-getitem?mr=#1}{#2}
}
\providecommand{\href}[2]{#2}
\begin{thebibliography}{Mum70}

\bibitem[BDP]{BDP}
Massimo Bertolini, Henri Darmon, and Kartik Prasanna, \emph{{{$p$}-adic
  L-functions and the coniveau filtration on Chow groups}}, preprint,
  \url{http://www.math.mcgill.ca/darmon/pub/Articles/Research/60.BDP5-coniveau/paper.pdf}.

\bibitem[Del75]{delignetate}
P.~Deligne, \emph{Courbes elliptiques: formulaire d'apr\`es {J}. {T}ate},
  Modular functions of one variable, {IV} ({P}roc. {I}nternat. {S}ummer
  {S}chool, {U}niv. {A}ntwerp, {A}ntwerp, 1972), Springer, Berlin, 1975,
  pp.~53--73. Lecture Notes in Math., Vol. 476. \MR{0387292 (52 \#8135)}

\bibitem[DM69]{DM}
P.~Deligne and D.~Mumford, \emph{The irreducibility of the space of curves of
  given genus}, Inst. Hautes \'Etudes Sci. Publ. Math. (1969), no.~36, 75--109.
  \MR{0262240 (41 \#6850)}

\bibitem[DR73]{DR}
P.~Deligne and M.~Rapoport, \emph{Les sch\'emas de modules de courbes
  elliptiques}, Modular functions of one variable, {II} ({P}roc. {I}nternat.
  {S}ummer {S}chool, {U}niv. {A}ntwerp, {A}ntwerp, 1972), Springer, Berlin,
  1973, pp.~143--316. Lecture Notes in Math., Vol. 349.

\bibitem[Gro90]{G90}
Benedict~H. Gross, \emph{A tameness criterion for {G}alois representations
  associated to modular forms (mod {$p$})}, Duke Math. J. \textbf{61} (1990),
  no.~2, 445--517.

\bibitem[Kil07]{kilbourn}
Timothy Kilbourn, \emph{Congruence properties of {F}ourier coefficients of
  modular forms}, ProQuest LLC, Ann Arbor, MI, 2007, Thesis (Ph.D.)--University
  of Illinois at Urbana-Champaign. \MR{2711108}

\bibitem[Kil08]{kilford}
L.~J.~P. Kilford, \emph{Modular forms}, Imperial College Press, London, 2008, A
  classical and computational introduction. \MR{2441106 (2009m:11001)}

\bibitem[K{\"o}h11]{koehler}
G{\"u}nter K{\"o}hler, \emph{Eta products and theta series identities},
  Springer Monographs in Mathematics, Springer, Heidelberg, 2011.

\bibitem[Miy06]{miyake}
Toshitsune Miyake, \emph{Modular forms}, english ed., Springer Monographs in
  Mathematics, Springer-Verlag, Berlin, 2006, Translated from the 1976 Japanese
  original by Yoshitaka Maeda. \MR{2194815 (2006g:11084)}

\bibitem[Mum70]{mumford}
David Mumford, \emph{Varieties defined by quadratic equations}, Questions on
  {A}lgebraic {V}arieties ({C}.{I}.{M}.{E}., {III} {C}iclo, {V}arenna, 1969),
  Edizioni Cremonese, Rome, 1970, pp.~29--100. \MR{0282975 (44 \#209)}

\bibitem[Rus12]{rustom}
Nadim Rustom, \emph{{Generators of graded algebras of modular forms}}, to
  appear in Journal of Number Theory. Preprint at
  \url{http://arxiv.org/abs/1209.3864}.

\bibitem[Sch79]{scholl}
A.~J. Scholl, \emph{On the algebra of modular forms on a congruence subgroup},
  Math. Proc. Cambridge Philos. Soc. \textbf{86} (1979), no.~3, 461--466.

\bibitem[Ser73]{serre}
Jean-Pierre Serre, \emph{Formes modulaires et fonctions z\^eta {$p$}-adiques},
  Modular functions of one variable, {III} ({P}roc. {I}nternat. {S}ummer
  {S}chool, {U}niv. {A}ntwerp, 1972), Springer, Berlin, 1973, pp.~191--268.
  Lecture Notes in Math., Vol. 350. \MR{0404145 (53 \#7949a)}

\bibitem[Sil09]{silverman}
Joseph~H. Silverman, \emph{The arithmetic of elliptic curves}, second ed.,
  Graduate Texts in Mathematics, vol. 106, Springer, Dordrecht, 2009.
  \MR{2514094 (2010i:11005)}

\bibitem[SS11]{TS11}
Hayato Saito and Tomohiko Suda, \emph{An explicit structure of the graded ring
  of modular forms of small level (pre-print)}, arXiv:1108.3933v3 [math.NT]
  (2011).

\bibitem[Wag80]{wagreich1}
Philip Wagreich, \emph{Algebras of automorphic forms with few generators},
  Trans. Amer. Math. Soc. \textbf{262} (1980), no.~2, 367--389. \MR{586722
  (82e:10044)}

\bibitem[Wag81]{wagreich2}
\bysame, \emph{Automorphic forms and singularities with {${\bf C}^{\ast}
  $}-action}, Illinois J. Math. \textbf{25} (1981), no.~3, 359--382. \MR{620423
  (82m:10045)}

\end{thebibliography}

Nadim Rustom, Department of Mathematical Sciences, University of Copenhagen, Universitetsparken 5, 2100 Copenhagen \O\space Denmark\\ \texttt{rustom@math.ku.dk}
\end{document}